\documentclass[12pt,twoside]{article}
\usepackage[T1]{fontenc}
\usepackage[utf8]{inputenc}
\usepackage{amsmath,amsthm,amssymb}
\usepackage{times}
\usepackage{enumitem}
\usepackage{amsfonts,calrsfs,color,verbatim,eucal,yfonts,mathrsfs,mathtools,bbm}
\usepackage{hyperref}
\usepackage{bbm}

\pagestyle{myheadings}
\def\titlerunning#1{\gdef\titrun{#1}}
\makeatletter
\def\author#1{\gdef\autrun{\def\and{\unskip, }#1}\gdef\@author{#1}}

\makeatother


\newtheorem{Theorem}{Theorem}[section]
\newtheorem{Definition}[Theorem]{Definition}
\newtheorem{Proposition}[Theorem]{Proposition}

\newtheorem{Remark}[Theorem]{Remark}
\newtheorem{Example}[Theorem]{Example}


\numberwithin{equation}{section}

\frenchspacing

\textwidth=15cm
\textheight=23cm
\parindent=16pt
\oddsidemargin=-0.5cm
\evensidemargin=-0.5cm
\topmargin=-0.5cm



\def\R{\mathbb R}
\def\N{\mathbb N}

\def\KC{\mathcal K}
\def\BC{\mathcal B}

\def\MC{\mathcal M}
\def\FC{\mathcal F}

\def\<{\langle}
\def\>{\rangle}

\newcommand{\yx}{\color{black}}

\newcommand{\xy}{\color{black}}


\begin{document}


\baselineskip=17pt


\titlerunning{Markov uniqueness and Fokker-Planck-Kolmogorov equations}

\title{{\Large \bf Markov uniqueness and Fokker-Planck-Kolmogorov equations}}

\author{{Sergio Albeverio$^{a}$},~~{Vladimir I. Bogachev$^{b}$},~~{Michael R\"{o}ckner$^{c, d}$}
\\
 \small $a.$ Institute for Applied Mathematics and HCM, University of Bonn, 53115 Bonn, Germany \\
 \small $b.$ Lomonosov Moscow State University, Russia and National Research \\\small
 University Higher
School of Economics, Russian Federation\\
  \small $c.$ Faculty of Mathematics, Bielefeld University, 33615 Bielefeld, Germany\\
   \small $d.$ Academy of Mathematics and Systems Science, CAS, 100190 Beijing, China}
\date{}
\maketitle
\begin{center}
\large{Dedicated to Masatoshi Fukushima for his $88^{th}$ birthday.}
\end{center}
\normalsize\begin{minipage}{145mm}
{\bf Abstract.}
In this paper we show that Markov uniqueness for symmetric pre-Dirichlet operators $L$ follows from 
the uniqueness of the corresponding Fokker-Planck-Kolmogorov equation (FPKE). Since in recent years a 
considerable number of uniqueness results for FPKE's have been achieved, we obtain new Markov uniqueness results 
in concrete cases. A selection of such will be presented in this paper. They include cases with killing and with degenerate diffusion coefficients.

\end{minipage}

\section{Introduction and framework}\label{section:1}

In this paper we fix a $\sigma$-finite measure space $(E,\BC,m)$. Let
$L^p:=L^p(m)=L^p(E,m)$, $p\in[1,\infty]$ be
the corresponding (real) $L^p$-spaces
 with their usual norms $\|\cdot\|_p$ and inner product $(\;,\;)_2$
 if $p=2$. On $L^p(m)$ we shall consider linear operators
 $$
 L\colon D(L)\subset L^p(m)\to L^p(m)
 $$ 
 with their usual partial order defined by
\begin{align*}
L_1\subset L_2 \underset{Def.}{\Leftrightarrow} \Gamma(L_1)\subset\Gamma(L_2),
\end{align*}
where $D(L)$ is a linear subspace of $L^p$, called domain of $L$, and
\begin{align*}
\Gamma(L):=\{(u,Lu)\in L^p\times L^p\colon u\in D(L) \}
\end{align*}
is the graph of $L$. In particular, we shall consider those $L$
 which generate a (unique) strongly continuous semigroup of (everywhere defined)
 continuous linear operators on $L^p$, denoted by $e^{tL}$, $t\geq0$.
 Henceforth such $L$ will be shortly called generator (on $L^{P}$).
 We refer to \cite{Pa1985} for the notions and the well-known characterization of such generators. We recall that a generator $L$ is always closed, i.e. $\Gamma(L)$ is a closed subset of $L^p\times L^p$, with domain $D(L)$ dense in $L^p$ and that for $p\in(1,\infty)$, its adjoint operator $(L^*,D(L^*))$ on $L^{p'}$, with $p':=\frac{p}{p-1}$, generates a strongly continuous semigroup of linear operators, $e^{tL^*}$, $t\geq0$, on $L^{p'}$. This satisfies
\begin{align}\label{adjoint}
e^{tL^*}=(e^{tL})^*,\quad t\geq0,
\end{align}
(see \cite[Chapter~1, Corollary 10.6]{Pa1985}).
We consider three cases of sets of generators on $L^p$ for $p\in(1,\infty)$:
\begin{enumerate}
\item[(1)] Let $D^*_0$ be a dense linear subspace of $L^{p'}$
and $L^*_0:D_0^*\subset L^{p'}\to L^{p'}$ a linear operator.
Define $\MC:=\MC(L_0^*,D_0^*)$ to be the set of all linear operators
$L\colon D(L)\subset L^p\to L^p$ such that $L_0^*\subset L^*$ and $L$ is a generator on $L^{p}$.

\item[(2)] Let $D_0$ be a dense linear subspace of $L^2$ and $L_0\colon
D_0\subset L^2\to L^2$ a symmetric linear operator, i.e., $L_0\subset L_0^*$, which is upper bounded, i.e.
\begin{align*} 
\sup_{u\in D_0\setminus\{0\}}(L_0u,u)_2 \yx ||u||_2^{-2} \xy \yx<\xy\yx\infty\xy.
\end{align*}
 Define $\MC_{sym}:=\MC_{sym}(L_0,D_0)$ to be the set of all linear
 operators $L\colon D(L)\subset L^2\to L^2$ such that $L_0\subset L$,
 $L$ is a generator on $L^{2}$ and $L$ is symmetric, i.e., $L\subset L^*$.

\item[(3)] Let $(L_0,D_0)$ be as in (2) and define $\MC_{sym,M}:=\MC_{sym,M}(L_0,D_0)$
to be the subset of all $(L,D(L))$ in $\MC_{sym}(L_0,D_0)$ such that each $e^{tL}$, $t\geq0$,
 is sub-Markovian, i.e., if $u\in L^2$ such that $0\leq u\leq 1$, then $0\leq e^{tL}u\leq1$.
\end{enumerate}

Concerning (2) we note that by \cite[Theorems 4.2 and 5.3]{Pa1985} it obviously follows
that $\MC_{sym}(L_0,D_0)$ coincides with the set of all linear operators
 $L\colon D(L)\subset L^2\to L^2$ such that $L_0\subset L$ and $L$ is upper
 bounded and self-adjoint, i.e., $L=L^*$. Furthermore, $\MC_{sym}(L_0,D_0)$ is not empty,
 because the Friedrichs extension of $(L_0,D_0)$ is self-adjoint and upper bounded
 (see e.g. \cite[p.~131]{FOT2011}).

Concerning (3) we refer to \cite{FOT2011} and \cite{MR1992} for more details on such
sub-Markovian operator semigroups.

The first aim of this paper is to derive a "parabolic" condition in each
of the cases (1),(2),(3) which implies that the respective sets $\MC,\MC_{sym},\MC_{sym,M}$
contain at most one element. Here, "parabolic" means in terms of
the corresponding Fokker-Planck-Kolmogorov equation (FPKE).
The second aim of this paper is (by refining this "parabolic condition")
to use uniqueness results from \cite{BKRS2015} to obtain new results on
"Markov uniqueness" \yx in the sense of the following definition:\xy

\begin{Definition}
\yx Let $(L_0,D_0)$ be as in (2) above. $(L_0,D_0)$ is called Markov unique if $\MC_{sym,M}$ contains  exactly one element.\xy
\end{Definition}

Let us note that our notion of "Markov uniqueness" is in fact stronger than
the one extensively studied in the literature, since there, uniqueness
is studied in the subset of all linear operators $(L,D(L))$ in $\MC_{sym,M}(L_0,D_0)$,
which are nonpositive definite, i.e., $\sup_{u\in D(L)}(Lu,u)_2\leq0$,
while also assuming that $(L_0,D_0)$ is nonpositive definite.

The literature on Markov uniqueness is \yx quite extensive and a number of types of state spaces $E$, as e.g. $\R^d$ or infinite dimensional vector spaces 
or manifolds have been considered .\xy To the best of our knowledge
the first paper on this subject is \cite{Ta1985} by Masayoshi Takeda.
To give an overview of the entire literature is beyond the scope of this paper.
Instead, we refer to the references in \cite{FOT2011},
\cite{Eb1999}, \yx \cite{EL2006}, \cite{RS2011}, \xy \cite{AR1995}, \cite{ARZ1993} and the more recent paper\yx s  \cite{AMR2014}\xy, \cite{RZZ2017}.
\\
It seems, however, that the method to prove Markov uniqueness proposed in this paper,
i.e., by using the corresponding FPKE, is new, though it is very natural.
Furthermore our applications and examples in Section 3\yx ,\ even though they are all in the classical case $E:=\R^d$, \xy appear to be not
covered by the existing literature, in particular, since \yx they include cases with degenerate diffusion coefficients and \xy we can allow "killing",
more precisely in our applications, where $L$ is a partial differential operator
on $\R^d$, this operator is allowed to have a (negative) zero order coefficient.
\\ \\
\yx Finally, we would like to recall the notion of "strong uniqueness" which is different from Markov uniqueness. In our context here
it means that the larger set $\MC_{sym} (L_0,D_0)$ contains exactly one element which is equivalent to the fact that the closure of $(L_0,D_0)$ is self adjoint on $L^2$.
For more details we refer to \cite{AKR1995}, \cite{Eb1999} and as a very recent paper to \cite{AKMR2020}, in particular to the lists of references in them.
\xy

\section{The main idea and a parabolic condition for uniqueness}\label{section:2}

For a set $\FC$ of real-valued functions on $E$ and $T\in(0,\infty)$ we define
$\FC_{,T}$ to be the set of all functions of the form
\begin{align*}
[0,T]\times E \ni (t,x)\mapsto f(t)\varphi(x)=:(f\otimes\varphi)(t,x),
\end{align*}
where $\varphi\in\FC$ and $f\in C^1([0,T];\R)$ with $f(T)=0$.

Let us start with case (1) from the introduction and consider $(L,D(L))\in \MC(L^*_0,D_0^*)$.

For $t\geq0$ we set
\begin{align*}
T_t^L:=e^{tL},\ T_t^{L^*}=e^{tL^*}
\end{align*}
(cf. \eqref{adjoint}). Then for all $\varphi\in D_0^*$, $u\in L^p$ and $t\geq0$ we have
\begin{align}\label{eq:2.1}
\int \varphi\ T_t^{L}u\ dm &= \int T_t^{L^*} \hspace{-0.4em}\varphi\ u\ dm \notag\\
&= \int \varphi\ u \ dm + \int_0^t\int T_s^{L^*} \hspace{-0.1cm}L^{*}\varphi\ u\ dm\ ds\notag\\
&= \int\varphi\ u\ dm + \int_0^t\int L_0^*\varphi\ T_s^{L}u\ dm\ ds.
\end{align}
Hence defining the (signed) measure $\mu_t(dx):=T_t^L\yx u\xy(x)\ m(dx),\ t\geq0$,
by the (integral) product rule for all $f\otimes\varphi\in D_{0,T}^*$ \yx (defined as above with $\FC := D_0^*$) \xy we have

\begin{align*}
\int(f\otimes\varphi)(t,x)\ \mu_t(dx)=f(t)\int\varphi(x)\ \mu_t(dx)
\end{align*}

\begin{align*}
&=f(0) \int\varphi\ d\mu_0+\int_0^t f(s)\int L_0^*\varphi\ d\mu_s\ ds
+\int_0^t f'(s)\int\varphi\ d\mu_s\ ds\\
&=\int (f\otimes\varphi)(0,x)\ \mu_0(dx)+\int_0^t\int(\frac{\partial}{\partial s}+L_0^*)(f\otimes\varphi)\ d\mu_s\ ds.
\end{align*}
In particular, for $t=T$
\begin{align}\label{eq:2.2}
\int_0^T \int (\frac{\partial}{\partial s}+L_0^*)(f\otimes\varphi)\ d\mu_s\ ds=-\int(f\otimes\varphi)(0,x)\ \mu_0(dx).
\end{align}
\eqref{eq:2.1} (equivalently \eqref{eq:2.2}) means that $\mu_t= T_t^{\yx L\xy} \yx u\xy\cdot m, \yx u\in L^p, \xy\ t\geq0$, solves the FPKE \yx (up to time $T$ for every $T\in (0,\infty))$
\xy
corresponding to $(L_0^*,D(L_0^*))$ (see \cite{BKRS2015}).

Now it is very easy to prove the following ''parabolic condition'' that ensures
that $$\#\MC(L_0^*,D_0^*)\leq1 $$ \yx (where as usual $\#$ is an abbreviation for cardinality).  \xy

\begin{Proposition}\label{prop2.1}
Assume that for every $T\in(0,\infty)$
\begin{align}\label{2.3}
(\frac{\partial}{\partial s}+L_0^*)D_{0,T}^*\text{ is dense in }L^{p'}([0,T]\times E,dt\otimes m).
\end{align}
Then $\MC(L_0^*,D_0^*)$ contains at most one element.
\end{Proposition}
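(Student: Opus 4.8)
The plan is to argue by contradiction, exploiting the fact (established in the text leading up to \eqref{eq:2.2}) that every $(L,D(L))\in\MC(L_0^*,D_0^*)$ produces, for each $u\in L^p$ and each $T\in(0,\infty)$, a solution $\mu_t=T_t^Lu\cdot m$ of the FPKE \eqref{eq:2.2} with the same initial datum $\mu_0=u\cdot m$. Suppose $L^{(1)},L^{(2)}\in\MC(L_0^*,D_0^*)$. Fix $u\in L^p$ and $T\in(0,\infty)$, and set $\mu_t^{(i)}:=T_t^{L^{(i)}}u\cdot m$ for $i=1,2$. Subtracting the two instances of \eqref{eq:2.2}, the right-hand sides cancel (both equal $-\int\varphi\,f(0)\,u\,dm$), so the signed measures $\nu_t:=\mu_t^{(1)}-\mu_t^{(2)}=(T_t^{L^{(1)}}u-T_t^{L^{(2)}}u)\cdot m$ satisfy
\begin{align*}
\int_0^T\int\Big(\tfrac{\partial}{\partial s}+L_0^*\Big)(f\otimes\varphi)\,d\nu_s\,ds=0
\qquad\text{for all }f\otimes\varphi\in D_{0,T}^*.
\end{align*}

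Next I would reinterpret this as an orthogonality statement in $L^{p'}([0,T]\times E,dt\otimes m)$. Write $w(s,x):=T_s^{L^{(1)}}u(x)-T_s^{L^{(2)}}u(x)$, so that $d\nu_s=w(s,\cdot)\,m$ and the displayed identity reads
\begin{align*}
\int_0^T\int_E \Big(\tfrac{\partial}{\partial s}+L_0^*\Big)(f\otimes\varphi)(s,x)\;w(s,x)\,m(dx)\,ds=0,
\qquad f\otimes\varphi\in D_{0,T}^*.
\end{align*}
Here $w$ lies in $L^p([0,T]\times E,dt\otimes m)$: indeed $s\mapsto T_s^{L^{(i)}}u$ is continuous from $[0,T]$ into $L^p(m)$, hence bounded in $L^p(m)$ on $[0,T]$, hence jointly in $L^p$ of the product space (one should note this measurability/integrability point, but it is routine from strong continuity of the semigroups). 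Thus $w$ represents a continuous linear functional on $L^{p'}([0,T]\times E,dt\otimes m)$ via integration, and the identity above says precisely that this functional annihilates the linear subspace $(\tfrac{\partial}{\partial s}+L_0^*)D_{0,T}^*$. By hypothesis \eqref{2.3} that subspace is dense, so the functional is zero, i.e. $w=0$ in $L^p([0,T]\times E,dt\otimes m)$. Hence $T_s^{L^{(1)}}u=T_s^{L^{(2)}}u$ in $L^p(m)$ for a.e.\ $s\in[0,T]$, and by continuity in $s$ for every $s\in[0,T]$.

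Finally, since $T$ and $u$ were arbitrary, $e^{tL^{(1)}}u=e^{tL^{(2)}}u$ for all $t\ge0$ and all $u\in L^p(m)$; differentiating at $t=0$ on the common domain, or simply invoking that a strongly continuous semigroup determines its generator uniquely, yields $L^{(1)}=L^{(2)}$. This shows $\#\MC(L_0^*,D_0^*)\le1$. The one place that deserves care — and the step I expect to be the genuine (if modest) obstacle — is the passage from the weak identity to ``$w=0$'': one must make sure that $w$ does define a bounded functional on the \emph{full} space $L^{p'}([0,T]\times E,dt\otimes m)$ with dual $L^p([0,T]\times E,dt\otimes m)$ (this uses $p\in(1,\infty)$, so $p'<\infty$ and the duality is the standard one), and that the density in \eqref{2.3} is density in norm in $L^{p'}$ of the product space, which is exactly what is assumed. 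Everything else is bookkeeping with the semigroup identities already derived in the text.
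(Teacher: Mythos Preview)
Your proof is correct and follows essentially the same approach as the paper: both versions subtract the two FPKE identities \eqref{eq:2.2} for the candidate semigroups, observe that the difference $g(s,\cdot)=T_s^{L^{(1)}}u-T_s^{L^{(2)}}u$ lies in $L^p([0,T]\times E,dt\otimes m)$ and annihilates the dense set $(\tfrac{\partial}{\partial s}+L_0^*)D_{0,T}^*$ in $L^{p'}$, and conclude $g=0$. Your write-up is in fact more explicit than the paper's about the duality pairing, the measurability/integrability of $w$, and the passage from a.e.\ equality to equality for all $s$ via strong continuity.
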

\begin{proof}
Let $(\tilde{L},D(\tilde{L}))\in\MC_{sym}(L_0^*,D_0^*)$. Then,
as seen above, $\tilde{\mu}_t:=T_t^{\tilde{L}}\yx u\xy\ m$, \yx $u \in L^p,$ \xy $t\geq0$,
also satisfies \eqref{eq:2.1}, hence \eqref{eq:2.2}. So, (by subtracting)
for $g(t,\cdot):=T_t^L\yx u\xy-T_t^{\tilde{L}}\yx u\xy,\ t\geq0,$ we obtain for all $T\in(0,\infty)$
\begin{align*}
\int_0^T\int(\frac{\partial}{\partial s}+L_0^*)(f\otimes\varphi)\ g(s,\cdot)\ dm\ ds = 0
\end{align*}
for all $f\otimes\varphi\in D_{0,T}^*$.
Since $g\in L^p([0,T]\times E,\ dt\otimes m)$, by \eqref{2.3} it follows that $g=0$, and the assertion follows, since $\yx u\xy \in L^p$ was arbitrary.
\end{proof}

Now let us consider case (2) from the introduction. So, let $(L,D(L))\in\MC_{sym}(L_0,D_0)$. Then using the same notation 
as in case (1) we analogously obtain for all $\varphi\in D_0,\ u\in L^2$ and $t\geq0$
\begin{align}\label{eq:2.4}
\int \varphi\ T_t^{L}u\ dm = \int \varphi\ u\ dm + \int_0^t\int L_0\varphi\ T_s^{L}u\ dm\ ds,
\end{align}
hence for $\mu_t:=T_t^{L}u\ m$ and for all $f\otimes\varphi\in D_{0,T}$, $T\in(0,\infty)$
we have
\begin{align}\label{eq:2.5}
\int_0^T(\frac{\partial}{\partial s}+L_0^*)(f\otimes\varphi)\ d\mu_s\ ds=-\int(f\otimes\varphi)(0,x)\ \mu_0(dx),
\end{align}
i.e., $\mu_t,\ t\geq0,$ solves the FPKE corresponding to $(L_0,D_0)$.\\
Analogously to Proposition \ref{prop2.1} we then prove the following result.

\begin{Proposition}\label{prop2.2}
Assume that for every $T\in(0,\infty)$
\begin{align}\label{2.6}
(\frac{\partial}{\partial s}+L_0)D_{0,T} \text{ is dense in } L^2([0,T]\times E,dt\otimes m).
\end{align}
Then $\MC_{sym}(L_0,D_0)$ consists of exactly one element.
\\
\
\\
\textnormal{In case (3) if $(L,D(L))\in\MC_{sym,M}(L_0,D_0)$, then obviously $T_t^L(L^2\cap L^\infty)\subset L^2\cap L^\infty$, and since $L^2\cap L^\infty$ is dense in $L^2$, $T_t^L$ is uniquely determined on $L^2\cap L^\infty$.
}
\end{Proposition}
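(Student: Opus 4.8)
The plan is to carry the proof of Proposition~\ref{prop2.1} over to the $L^2$-setting essentially verbatim, and then dispatch the parenthetical remark on case~(3) by an elementary argument. For the first, main assertion I would start from two given elements $(L,D(L))$ and $(\tilde L,D(\tilde L))$ of $\MC_{sym}(L_0,D_0)$ and an arbitrary $u\in L^2$. By \eqref{eq:2.4}--\eqref{eq:2.5}, both $\mu_t:=T_t^L u\cdot m$ and $\tilde\mu_t:=T_t^{\tilde L}u\cdot m$ solve the integrated FPKE \eqref{eq:2.5} for every $T\in(0,\infty)$ and every $f\otimes\varphi\in D_{0,T}$; since $L_0\subset L_0^*$ we have $L_0^*\varphi=L_0\varphi$ on $D_0$, so that $(\tfrac{\partial}{\partial s}+L_0^*)(f\otimes\varphi)=(\tfrac{\partial}{\partial s}+L_0)(f\otimes\varphi)$ is an element of $L^2([0,T]\times E,dt\otimes m)$ to which hypothesis \eqref{2.6} applies. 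Subtracting the two instances of \eqref{eq:2.5} -- whose right-hand sides coincide because $\mu_0=\tilde\mu_0=u\cdot m$ -- shows that $g(s,\cdot):=T_s^L u-T_s^{\tilde L}u\in L^2([0,T]\times E,dt\otimes m)$ is orthogonal in that space to $(\tfrac{\partial}{\partial s}+L_0)D_{0,T}$; by density this forces $g=0$, i.e.\ $T_t^L u=T_t^{\tilde L}u$ for $dt$-a.e.\ $t\in[0,T]$, hence, $T>0$ being arbitrary and $t\mapsto T_t^L u$ being strongly continuous, for every $t\ge0$. As $u\in L^2$ was arbitrary, $T_t^L=T_t^{\tilde L}$ for all $t\ge0$, and therefore $L=\tilde L$, since a strongly continuous semigroup determines its generator uniquely. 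To upgrade ``$\#\MC_{sym}(L_0,D_0)\le1$'' to ``$=1$'', I would then invoke the fact recalled in Section~\ref{section:1} that the Friedrichs extension of $(L_0,D_0)$ is upper bounded and self-adjoint, hence belongs to $\MC_{sym}(L_0,D_0)$, so this set is non-empty.

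For the statement concerning case~(3): if $(L,D(L))\in\MC_{sym,M}(L_0,D_0)$ and $u\in L^2\cap L^\infty$, I would decompose $u=u^+-u^-$ with $0\le u^\pm\le\|u\|_\infty$ and $u^\pm\in L^2\cap L^\infty$, and apply sub-Markovianity (by linearity and rescaling) to obtain $0\le T_t^L u^\pm\le\|u\|_\infty$, whence $\|T_t^L u\|_\infty\le\|u\|_\infty$; in particular $T_t^L(L^2\cap L^\infty)\subset L^2\cap L^\infty$. Since for every $v\in L^2$ the truncations $(v\wedge n)\vee(-n)$ belong to $L^2\cap L^\infty$ and converge to $v$ in $L^2$, the subspace $L^2\cap L^\infty$ is dense in $L^2$, and as $T_t^L$ is a bounded operator on $L^2$, it is completely determined by its restriction to $L^2\cap L^\infty$. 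This is precisely the observation that will later allow us to test the FPKE only against the solutions $\mu_t=T_t^L u\cdot m$ with $u\in L^2\cap L^\infty$, i.e.\ those with bounded density, for which the sharper uniqueness results of \cite{BKRS2015} become available.

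I do not expect a genuine obstacle in either part: the first is a line-by-line transcription of Proposition~\ref{prop2.1} once one notes the trivial identity $L_0^*|_{D_0}=L_0$ and recalls non-emptiness via the Friedrichs extension, and the second is elementary. The single step deserving a moment's care is the passage from ``$g=0$ in $L^2([0,T]\times E,dt\otimes m)$'' to ``$T_t^L u=T_t^{\tilde L}u$ for all $t\ge0$'', which relies on strong continuity of $t\mapsto T_t^L u$ in $L^2$ to promote an a.e.-in-$t$ identity to one valid for every $t$, together with the standard fact that two strongly continuous semigroups consisting of the same family of operators have the same generator.
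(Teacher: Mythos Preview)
Your proposal is correct and follows exactly the route the paper intends: the paper simply writes ``Analogously to Proposition~\ref{prop2.1} we then prove the following result'' and leaves the details implicit, and you have spelled out precisely those details (including the non-emptiness via the Friedrichs extension, which the paper recalls in Section~\ref{section:1}). Your handling of the case~(3) remark is also fine; the paper treats it as obvious and gives no argument beyond the word ``obviously''.
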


\begin{Proposition}\label{prop2.3}
\textnormal{
Suppose that $L_0(D_0) \subset L^{1}$ (which automatically holds if $m(E) < \infty$) and that for every T $\in (0,\infty)$}
\begin{align}\label{eq:2.7}
(\frac{\partial}{\partial s}+L_0)D_{0,T} \text{ is dense in } L^1([0,T]\times E,dt\otimes m).
\end{align}
Then $\MC_{sym, M}(L_0,D_0)$ consists of at most one element. If the semigroup generated by the Friedrichs 
extension of $(L_0, D_0)$ is sub-Markovian, then this extension is the unique element in $\MC_{sym,M}(L_0,D_0)$.
\end{Proposition}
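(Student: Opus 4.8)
The plan is to rerun the duality argument of Propositions~\ref{prop2.1} and~\ref{prop2.2} in the pairing between $L^1$ and $L^\infty$ instead of in $L^2$, the sub-Markovian property being exactly what makes this possible. First I would reduce the claim to the coincidence of the associated semigroups: by the remark following Proposition~\ref{prop2.2}, if $(L,D(L)),(\tilde L,D(\tilde L))\in\MC_{sym,M}(L_0,D_0)$, then both $T_t^L:=e^{tL}$ and $T_t^{\tilde L}:=e^{t\tilde L}$ leave $L^2\cap L^\infty$ invariant, and since $L^2\cap L^\infty$ is dense in $L^2$ and a strongly continuous semigroup determines its generator, it suffices to show $T_t^Lu=T_t^{\tilde L}u$ for every $u\in L^2\cap L^\infty$ and every $t\ge0$. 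Fix such a $u$. By \eqref{eq:2.4} and \eqref{eq:2.5} (valid for any $u\in L^2$), both $\mu_s:=T_s^Lu\cdot m$ and $\tilde\mu_s:=T_s^{\tilde L}u\cdot m$ solve the FPKE for $(L_0,D_0)$, and since $\mu_0=\tilde\mu_0=u\cdot m$, subtracting gives, with $g(s,\cdot):=T_s^Lu-T_s^{\tilde L}u$,
\begin{align*}
\int_0^T\int\Bigl(\frac{\partial}{\partial s}+L_0\Bigr)(f\otimes\varphi)\,g(s,\cdot)\,dm\,ds=0\qquad\text{for all }f\otimes\varphi\in D_{0,T},\ T\in(0,\infty).
\end{align*}

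The key point is that $g\in L^\infty([0,T]\times E,\,dt\otimes m)$: sub-Markovianity of $e^{sL}$ and $e^{s\tilde L}$ forces $\|T_s^Lu\|_\infty\le\|u\|_\infty$ and $\|T_s^{\tilde L}u\|_\infty\le\|u\|_\infty$ for all $s\ge0$, so $g$ is bounded uniformly in $s$. Since $m$, and hence $dt\otimes m$ on $[0,T]\times E$, is $\sigma$-finite, $L^\infty([0,T]\times E,dt\otimes m)$ is the dual of $L^1([0,T]\times E,dt\otimes m)$; moreover, because $L_0(D_0)\subset L^1$ (and, implicitly, $D_0\subset L^1$, which together are automatic when $m(E)<\infty$), the set $(\frac{\partial}{\partial s}+L_0)D_{0,T}$ lies in $L^1([0,T]\times E,dt\otimes m)$, so the displayed identity says precisely that $g$, viewed as a bounded linear functional on $L^1([0,T]\times E,dt\otimes m)$, annihilates this set. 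By hypothesis~\eqref{eq:2.7} the set is dense, so $g=0$, i.e.\ $T_s^Lu=T_s^{\tilde L}u$ for a.e.\ $s\in[0,T]$, hence for all $s\ge0$ by strong continuity and by letting $T\to\infty$. As $u\in L^2\cap L^\infty$ was arbitrary, $T_t^L=T_t^{\tilde L}$ for all $t\ge0$, whence $L=\tilde L$; this proves $\#\MC_{sym,M}(L_0,D_0)\le1$.

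For the last assertion I would recall, as in Section~\ref{section:1}, that the Friedrichs extension $(L^F,D(L^F))$ of $(L_0,D_0)$ is self-adjoint and upper bounded, hence a generator on $L^2$ with $L_0\subset L^F$, so $L^F\in\MC_{sym}(L_0,D_0)$. If in addition $e^{tL^F}$ is sub-Markovian, then $L^F\in\MC_{sym,M}(L_0,D_0)$, so this set is non-empty, and combined with $\#\MC_{sym,M}(L_0,D_0)\le1$ this forces $L^F$ to be its unique element.

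I expect the only step requiring care to be the transition to the $L^1$--$L^\infty$ duality: checking that $g$ really belongs to $L^\infty=(L^1)^*$ (this is exactly where sub-Markovianity and $\sigma$-finiteness are used) and that the test functions $(\frac{\partial}{\partial s}+L_0)(f\otimes\varphi)$ genuinely lie in $L^1$, so that \eqref{eq:2.7} can be invoked. Everything else is a routine transcription of the arguments for Propositions~\ref{prop2.1} and~\ref{prop2.2}.
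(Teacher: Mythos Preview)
Your proposal is correct and follows exactly the approach of the paper: restrict to $u\in L^2\cap L^\infty$, use sub-Markovianity to get $g\in L^\infty([0,T]\times E,dt\otimes m)$, and then invoke the density hypothesis \eqref{eq:2.7} via the $L^1$--$L^\infty$ pairing. The paper's own proof is extremely terse (four lines), and you have simply spelled out the details it leaves implicit, including the role of $\sigma$-finiteness and the need for $D_0\subset L^1$ so that the test functions lie in $L^1$.
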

\begin{proof}
We repeat the proof of Proposition \ref{prop2.2} (respectively, \ref{prop2.1}) with $u \in L^2 \cap L^{\infty}$. 
Then for all $T \in (0, \infty)$ we have for $g$ as in the proof of Proposition 2.3 that $g \in L^{\infty} ([0, T] \times E,dt\otimes m)$. 
Then by \eqref{eq:2.7} we conclude again that $g = 0$. Since $T^{L}_{t}$ is uniquely determined on $L^{2} \cap L^{\infty}$, the assertion follows.
\end{proof}

Clearly, conditions \eqref{2.3}, \eqref{2.6} and \eqref{eq:2.7} are not easy to check in applications 
and certainly too strong, at least in case (3). So, let us discuss a weaker condition in this case.

Let $(L, D(L)) \in \MC_{sym, M}(L_{0}, D_{0})$ and fix $u \in L^{\infty}$ such that $u \geq 0 $ and $\int u\; dm = 1$. 
Then $\mu^{L}_{t} := T^{L}_{t} u\; m,\; t \geq 0$, are subprobability measures on $(E, \BC)$, i.e., $\mu^{L}_{t}(E) \leq 1$ 
for all $t \geq 0$. We note that obviously each $T^{L}_{t}$ is uniquely determined by its values on such $u$. 
We have seen that $\mu_t := \mu^{L}_{t}, t \geq 0$, solves the corresponding FPKE

\begin{align}\label{eq:2.9}
\int\varphi \ d\mu_{t} = \int\varphi\ d\mu_{0} + \int_0^t \int L_{0} \varphi \ d\mu_{s} ds, t \geq 0, \forall \varphi \in D_{0},
\end{align}
hence for all $T \in (0, \infty), f \otimes \varphi \in D_{0, T}$
\begin{align}\label{eq:2.10}
\int_0^T \int (\frac{\partial}{\partial s}+L_0)(f\otimes\varphi)\ d\mu_s\ ds = - \int (f \otimes \varphi)(0, x)\mu_{0}(dx).
\end{align}
\eqref{eq:2.9} and \eqref{eq:2.10} are equations for paths $(\mu_{t})_{t \geq 0}$ of subprobability measures on $(E, \BC)$ 
such that $[0, \infty) \ni t \to \mu_{t}(A)$ is Lebesgue measurable for all $ A \in \BC$. Define $\mathcal S\mathcal P$ to be the set of all such paths, \yx and $\mathcal S\mathcal P$(T) the set of their restrictions to [0,T] \xy

Now the following result is obvious.

\begin{Theorem}\label{thm2.4}
If, for every probability density $u \in L^{1} \cap L^{\infty}$, \eqref{eq:2.9} (or \eqref{eq:2.10}) 
has a unique solution $(\mu_{t})_{t \geq 0} \in \mathcal S\mathcal P$ such that each $\mu_t, t \geq 0$, 
is absolutely continuous with respect to $m$ and such that $\mu_0 = u \cdot m$, then $\MC_{sym, M}(L_0, D_0)$ 
consists of at most one element. If, in addition, the semigroup generated by the Friedrichs extension of $(L_0, D(L_0))$ is sub-Makovian,
then this extension is the unique element in $\MC_{sym, M}(L_0, D_0)$.
\end{Theorem}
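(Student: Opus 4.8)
The argument is short and parallels Propositions \ref{prop2.1}--\ref{prop2.3}, with the hypothesized uniqueness for \eqref{eq:2.9} replacing a density condition on $(\frac{\partial}{\partial s}+L_0)D_{0,T}$. First I would take two operators $(L,D(L)),(\tilde L,D(\tilde L))\in\MC_{sym,M}(L_0,D_0)$ and fix a probability density $u\in L^1\cap L^\infty$. The point is to verify that $\mu^L_t:=T^L_t u\,m$ and $\mu^{\tilde L}_t:=T^{\tilde L}_t u\,m$, $t\ge0$, are both admissible competitors in the uniqueness statement. That they solve \eqref{eq:2.9} was established in the discussion preceding the theorem; absolute continuity with respect to $m$ and the initial condition $\mu^L_0=u\cdot m$ are immediate from the definition. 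For membership in $\mathcal{SP}$: since $u\in L^1\cap L^\infty\subset L^2$ with $0\le u\le\|u\|_\infty$, sub-Markovianity of $T^L_t$ on $L^2$ (after rescaling by $\|u\|_\infty$) gives $0\le T^L_t u\le\|u\|_\infty$, so each $\mu^L_t$ is a nonnegative measure absolutely continuous with respect to $m$; and, using that elements of $\MC_{sym,M}$ are self-adjoint, $\mu^L_t(E)=\sup_n(T^L_t u,\one_{A_n})_2=\sup_n(u,T^L_t\one_{A_n})_2\le\int u\,dm=1$ for any $A_n\uparrow E$ with $m(A_n)<\infty$, so $\mu^L_t$ is a subprobability measure. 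Measurability of $t\mapsto\mu^L_t(A)$ follows from strong continuity of $t\mapsto T^L_t u$ in $L^2$ (continuity when $m(A)<\infty$, then monotone approximation along $A_n\uparrow E$ by $\sigma$-finiteness of $m$).

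Granting this, the uniqueness hypothesis forces $\mu^L_t=\mu^{\tilde L}_t$, hence $T^L_t u=T^{\tilde L}_t u$ $m$-a.e., for all $t\ge0$ and every probability density $u\in L^1\cap L^\infty$. Every nonnegative element of $L^1\cap L^\infty$ is a nonnegative multiple of such a density, and every element of $L^1\cap L^\infty$ is a difference of two nonnegative ones, so by linearity $T^L_t=T^{\tilde L}_t$ on $L^1\cap L^\infty$. As $m$ is $\sigma$-finite, $L^1\cap L^\infty$ is dense in $L^2$ (it contains the simple functions supported on sets of finite $m$-measure), and $T^L_t,T^{\tilde L}_t$ are continuous on $L^2$, whence $T^L_t=T^{\tilde L}_t$ on $L^2$ for all $t\ge0$. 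Since a strongly continuous semigroup determines its generator, $L=\tilde L$, which proves $\#\MC_{sym,M}(L_0,D_0)\le1$.

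For the final assertion I would recall, as in the discussion of case (2), that the Friedrichs extension $(L_F,D(L_F))$ of $(L_0,D_0)$ is self-adjoint and upper bounded, hence a generator on $L^2$ with $L_0\subset L_F$, so $(L_F,D(L_F))\in\MC_{sym}(L_0,D_0)$. If in addition $(e^{tL_F})_{t\ge0}$ is sub-Markovian, then $(L_F,D(L_F))\in\MC_{sym,M}(L_0,D_0)$, so this set is nonempty and, combined with the bound just proved, has $(L_F,D(L_F))$ as its unique element.

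The only step needing genuine care is the verification that each $\mu^L_t$ is a subprobability measure, i.e. transferring positivity and the total-mass bound from the $L^2$-level sub-Markov property to densities in $L^1\cap L^\infty$; the remainder is routine bookkeeping, in line with the remark preceding the statement that the result is essentially immediate.
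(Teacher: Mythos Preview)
Your proposal is correct and follows exactly the approach the paper intends: the paper declares the result ``obvious'' after having already recorded that $\mu_t^L:=T_t^L u\,m$, $t\ge0$, are subprobability measures solving \eqref{eq:2.9} and that each $T_t^L$ is uniquely determined by its values on such $u$. You have simply supplied the routine details behind these two claims (sub-Markovianity and self-adjointness for the subprobability bound, density of $L^1\cap L^\infty$ in $L^2$ for the determination), which is precisely what the paper leaves implicit.
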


In Theorem \ref{thm2.4}, it is enough to prove uniqueness for \eqref{eq:2.10} (or \eqref{eq:2.11}) 
in the subclass of all $(\mu_t)_{t \geq 0} \in \mathcal S\mathcal P$ for which each $\mu_t, t \geq 0$, is absolutely continuous 
with respect to $m$ with bounded density, i.e., one only needs uniqueness in a convex subset of $\mathcal S\mathcal P$. 
Therefore, the following result, which was first observed in \cite{BDPRSt2007}, is useful and goes beyond absolutely continuous solutions.

\begin{Proposition}\label{prop2.4}
Let \yx T $\in (0,\infty)$ and \xy $\zeta$ be a subprobability measure on $(E, \BC)$ and let $\KC_{\zeta} \subset \mathcal S\mathcal P$\yx(T) \xy
be a non-empty convex set such that each $(\mu)_{t \yx \in  [0,T]\xy} \in \KC_{\zeta}$ is a solution to \eqref{eq:2.9} 
(hence to \eqref{eq:2.10}) \yx with \xy $\mu_{0} = \zeta$. Suppose that for every $(\mu_{t})_{\yx t \in [0,T]\xy} \yx\in \KC_{\zeta}\xy $
\begin{align}\label{eq:2.11}
(\frac{\partial}{\partial s}+L_0)(D_{0, T}) \text{ is dense in } L^{1}([0, T] \times E, \mu_{t}dt) .
\end{align}
Let $(\mu_{t})_{\yx t \in [0,T] \xy}, (\tilde{\mu}_{t})_{\yx t \in [0,T] \xy} \in \KC_{\zeta}$.
 Then $\mu_{t} = \tilde{\mu}_{t}$ for $dt$-a.e. $t \in [0, \yx T \xy]$.
\end{Proposition}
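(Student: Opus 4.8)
The plan is to run the classical ``doubling'' argument in tandem with an $L^{1}$-duality (Hahn--Banach) argument; the only genuinely new ingredient is that convexity of $\KC_\zeta$ lets one apply the density hypothesis \eqref{eq:2.11} not to $\mu_t$ or $\tilde\mu_t$ but to a suitable \emph{average} of the two. First I would set $\nu_t:=\tfrac12(\mu_t+\tilde\mu_t)$ for $t\in[0,T]$. Each $\nu_t$ is again a subprobability measure, $t\mapsto\nu_t(A)$ is Lebesgue measurable for every $A\in\BC$, $\nu_0=\zeta$, and, since \eqref{eq:2.9} is affine in the measure, $(\nu_t)_{t\in[0,T]}$ also solves \eqref{eq:2.9} (hence \eqref{eq:2.10}) with initial datum $\zeta$; thus $(\nu_t)_{t\in[0,T]}\in\mathcal S\mathcal P(T)$, and by convexity of $\KC_\zeta$ even $(\nu_t)_{t\in[0,T]}\in\KC_\zeta$. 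Hence \eqref{eq:2.11} applies to this path: $(\tfrac{\partial}{\partial s}+L_0)(D_{0,T})$ is dense in $L^{1}([0,T]\times E,\nu_t\,dt)$.

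Next I would subtract the two instances of \eqref{eq:2.10}; since both have the same right-hand side $-\int(f\otimes\varphi)(0,x)\,\zeta(dx)$, for every $f\otimes\varphi\in D_{0,T}$ we obtain
\begin{align*}
\int_0^T\int\Big(\tfrac{\partial}{\partial s}+L_0\Big)(f\otimes\varphi)\;d(\mu_s-\tilde\mu_s)\,ds=0 .
\end{align*}
Now form the signed measure $\lambda$ on $\big([0,T]\times E,\BC([0,T])\otimes\BC\big)$ by $\lambda(A):=\int_0^T(\mu_s-\tilde\mu_s)(A_s)\,ds$, where $A_s$ is the $s$-section of $A$; this is well defined (monotone class for measurability of $s\mapsto(\mu_s-\tilde\mu_s)(A_s)$, dominated convergence for countable additivity). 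With $\nu:=\nu_t\,dt$, the bounds $\mu_s,\tilde\mu_s\le 2\nu_s$ give $|\lambda(A)|\le 2\nu(A)$ for all measurable $A$, so $\lambda\ll\nu$ with a jointly measurable Radon--Nikodym density $\rho:=d\lambda/d\nu$ satisfying $|\rho|\le 2$ $\nu$-a.e.; in particular $\rho\in L^{\infty}([0,T]\times E,\nu)$, and the displayed identity rewrites as $\int(\tfrac{\partial}{\partial s}+L_0)(f\otimes\varphi)\,\rho\,d\nu=0$ for all $f\otimes\varphi\in D_{0,T}$.

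Finally, the map $\psi\mapsto\int\psi\,\rho\,d\nu$ is a continuous linear functional on $L^{1}([0,T]\times E,\nu)$ vanishing on $(\tfrac{\partial}{\partial s}+L_0)(D_{0,T})$, which is dense by the first paragraph; hence it vanishes identically and $\rho=0$ $\nu$-a.e. By Fubini this forces $\rho(t,\cdot)=0$ $\nu_t$-a.e. for $dt$-a.e. $t\in[0,T]$, and since $\rho(t,\cdot)=d(\mu_t-\tilde\mu_t)/d\nu_t$ $\nu_t$-a.e. for $dt$-a.e. $t$ (disintegration of the density, using $\mu_t,\tilde\mu_t\ll\nu_t$), we conclude $\mu_t=\tilde\mu_t$ for $dt$-a.e. $t\in[0,T]$. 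The argument is essentially soft; the step that needs a little care is the construction of the product measure $\lambda$ and the verification that its density is honestly bounded, since this is exactly what legitimizes the $L^{1}$-duality, while the conceptual point is that one uses the density of $(\tfrac{\partial}{\partial s}+L_0)(D_{0,T})$ in $L^{1}$ against the \emph{averaged} measure $\nu_t\,dt$ — convexity of $\KC_\zeta$ is essential here because $\mu_t-\tilde\mu_t$ need not be absolutely continuous with respect to $\mu_t$ or $\tilde\mu_t$ separately.
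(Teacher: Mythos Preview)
Your argument is correct and follows essentially the same route as the paper: form the average $\nu_t=\tfrac12(\mu_t+\tilde\mu_t)\in\KC_\zeta$ by convexity, note that $\mu_t\,dt$ and $\tilde\mu_t\,dt$ have bounded densities with respect to $\nu_t\,dt$, subtract the two instances of \eqref{eq:2.10}, and use the $L^1$-density hypothesis \eqref{eq:2.11} against $\nu_t\,dt$ to kill the difference. Your write-up is in fact more careful than the paper's (which simply asserts the existence of the bounded densities $g,\tilde g$ and concludes), since you spell out the construction of the signed measure $\lambda$, the bound $|\rho|\le 2$, and the disintegration step passing from $\rho=0$ $\nu_t\,dt$-a.e.\ to $\mu_t=\tilde\mu_t$ for $dt$-a.e.\ $t$.
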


\begin{Remark}\rm
Clearly for $(L, D(L)) \in \MC_{sym, M}(L_{0}, D_{0})$ and the corresponding solutions $(\mu_{t}^{L})_{t \geq 0}$ defined above, 
condition \eqref{eq:2.11} is weaker than condition \eqref{eq:2.7} in Proposition \ref{prop2.3}, 
since $\sup_{t\in [0, T]} \Vert{T^{L}_{t}u}\Vert_{\infty} < \infty$.
\end{Remark}

\begin{proof}[Proof of Proposition 2.5]
Since $\KC_{\zeta}$ is convex, we have that $\nu_{t} := \frac{1}{2} \mu_{t} + \frac{1}{2} \tilde{\mu}_{t}, t \geq 0$, 
is again in $\KC_{\zeta}$ and
\begin{align}
\mu_t dt = g \nu_t dt,\;  \tilde{\mu}_t dt = \tilde{g} \nu_t dt
\end{align}
for some $g, \tilde{g} \in L^{\infty} ([0, T] \times E, \nu_{t} dt)$.

Furthermore, by \eqref{eq:2.10} it follows that for all $f \otimes \varphi \in D_{0, T}$
\begin{align}
\int_0^T \int (\frac{\partial}{\partial s}+L_0)
(f\otimes\varphi)(g - \tilde{g})\ d\nu_s\ ds = 0.
\end{align}
Hence by (2.10) this implies that $g = \tilde{g}$ and the assertion follows.
\end{proof}

 Proposition \ref{prop2.4} and the observation that (at least in many cases)
  it suffices to check \eqref{eq:2.11} for just one solution in $\KC_{\zeta}$,
  are the core of the proof of many results on uniqueness
  of solutions in $\mathcal S\mathcal P$ to concrete FPKEs
  in Chapter 9 of \cite{BKRS2015},
  which thus can be applied to prove Markov uniqueness
  for many examples of given operators $(L_{0}, D_{0})$ on $L^{2}(m)$ as above.
  We shall present a selection of such in the next section.
  We shall restrict ourselves to the symmetric case, i.e.,
  $p = 2$ and $L_{0} \subset L_{0}^*$, though also nonsymmetric cases
  (as in case (1) from Section 1) can be treated
   if one has enough knowledge about the dual operator
   $(L_{0}^*, D_{0}^*)$ on $(L^{p})'$ for $p \in (1, \infty)$ (see Remark 4.2 below).

\section{Some uniqueness results for FPKEs}\label{section:3}

\yx In the rest of the paper we shall concentrate on the case where the state space $E$ is equal to $\R^d$. By the same ideas it is, however, possible to 
obtain Markov uniqueness from uniqueness results of FPKEs on more general state spaces, including infinite dimensional vector spaces or manifolds. This will be done in future work.
\xy
\subsection{Fokker-Planck-Kolmogorov equations}

As already mentioned we shall use the uniqueness results on FPKEs
from \cite[Chapter 9]{BKRS2015}.
So, let us briefly recall the framework there, but for simplicity
restricting to solutions in $\mathcal S\mathcal P$, since we shall
only use these in our applications below.

Below $(E,\BC)$ from the previous sections will always be $(\R^d,\BC(\R^d)),\ d\in\N,$
where $\BC(\R^d)$ denotes the Borel $\sigma$-algebra of $\R^d$.
Consider a partial differential operator of the form
\begin{align}\label{eq:3.1}
L_{0}\varphi=a^{ij}\partial_{x_i}\partial_{x_j}\varphi+b^i\partial_{x_i}\varphi+c\varphi,\quad \varphi\in D_0:=C_0^\infty(\R^d),
\end{align}
where we use Einstein's summation convention,
$\partial_{x_i}:=\frac{\partial}{\partial x_i},\ 1\leq
i\leq d,\ a^{ij},\ b^i,\ c\ \colon [0,T]\times\R^d\to\R$,
with $c\leq0$, are $\BC(\R^d)$-measurable functions,
$A(t,x):=(a^{ij}(t,x))_{1\leq i,j\leq d}$ is a nonnegative definite
matrix for all $(t,x)\in[0,T]\times\R^d$ and $T\in(0,\infty)$ is fixed.
For some of the results below we need to assume local boundedness
and local strict ellipticity of $A$, i.e.\yx :\xy
\begin{enumerate}
  \item[(H1)] For each ball $U\subset\R^d$
  there exist $\gamma(U),\ M(U)\in(0,\infty)$ such that
  \begin{align*}
 \gamma(U)\cdot I \leq A(t,x)\leq M(U)\cdot I \quad \forall\ (t,x)\in[0,T]\times\R^d,
  \end{align*}
  where $I$ denotes the $d\times d$ identity matrix.
\end{enumerate}

Let $\mathcal S\mathcal P$ be defined as in Section \ref{section:2}.
We say that $(\mu_t)_{t\geq0}\in\mathcal S\mathcal P$ satisfies the
FPKE (up to time $T$ for $L_0$)
 if $a^{ij},\ b^{i},\ c\in L^1_{loc}([0,T]\times\R^d,\mu_t\ dt)$ and
 for every $\varphi\in C^\infty_0(\R^d)$
\begin{align}\label{eq:3.2}
\int\varphi\ d\mu_t=\int \varphi\ d\mu_0
+\int_0^t\int L_0\varphi\ d\mu_s\ ds\quad \text{for $dt$-a.e. }t\in[0,T].
\end{align}
In Subsection \ref{eq:3.2} - \ref{eq:3.4} below we shall only be interested in the
so-called subprobability solutions to \eqref{eq:3.2}, i.e., we a priori
restrict to a class $\mathcal S\mathcal P_{\nu}\subset \mathcal S\mathcal P$
in which we search for a (hopefully unique) solution to \eqref{eq:3.2}.
So, given a subprobability measure $\nu$ on $\BC(\R^d)$ (i.e.,
 $\nu\geq0$ and $\nu(\R^d)\leq1)$, $\mathcal S\mathcal P_\nu$ is defined to
 be the set of all $(\mu_t)_{ \yx t \in [0,T] \xy}\in\mathcal S\mathcal P\yx(T)\xy$ with the following properties:
\begin{align}
&(\mu_t)_{\yx t\in [0,T] \xy} \text{ solves \eqref{eq:3.2}},\label{eq:3.3}\\
&c\in L^1([0,T]\times \R^d,\mu_t\ dt),\label{eq:3.4}\\
&b\in L^2([0,T]\times U, \mu_t\ dt;\R^d) \text{ for all balls }U\subset\R^d,\label{eq:3.5}\\
\mu_0=\nu \text{ and } &\mu_t(\R^d)\leq\nu(\R^d)+\int_0^t\int c(x,s)\ \mu_s(dx)\ ds \text{ for $dt$-a.e. }t\in[0,T]\label{eq:3.6}.
\end{align}
Clearly, if $\nu\neq0$, by dividing by $\nu(\R^d)$, we may assume, without loss of generality concerning the 
uniqueness of solutions in $\mathcal S\mathcal P_\nu$ for \eqref{eq:3.2}, that $\nu(\R^d)=1$.
Below we fix a probability measure $\nu$ on $\BC(\R^d)$.
\\
\
\\
Now let us recall several uniqueness results for \eqref{eq:3.2} from \cite[Chapter 9,]{BKRS2015}. Below let $dx$ denote Lebesgue measure on $\R^d$.
\subsection{Nondegenerate VMO diffusion coefficients}

Let us recall the definition of the VMO(=vanishing mean oscillation)-property of a function (see \cite{K2007} 
and the references therein), which is a vast generalization of local Lipschitzianity.

Let $g$ be a bounded \yx Borel-measurable \xy function on $\R^{d+1}$. Set
\begin{align*}
O(g,R) :=\sup_{(x,t)\in\R^{d+1}}&\sup_{r\leq R}r^{-2}|U(x,r)|^{-2}\times\\
&\times\int_t^{t+r^2}\int\int_{y,z\in U(x,r)}|g(s,y)-g(s,z)|\ dy\ dz\ ds.
\end{align*}
If $\lim\limits_{R\to0}O(g,R)=0$, then we say that the function $g$ belongs to the class $VMO_x(\R^{d+1})$.

Suppose that a \yx Borel-measurable \xy function $g$ is defined on $\yx[0,T]\times\R^d\xy$ and bounded on $\yx[0,T]\times U\xy$ for every ball $U$. 
We extend $g$ by zero to the whole space $\R^{d+1}$. If for every function $\zeta\in C_0^\infty(R^d)$ the 
function $g\zeta$ belongs to the class $VMO_x(\R^{d+1})$, then we say that $g$ belongs to the class $VMO_{x,loc}([0,T]\times \R^d)$.

\begin{Theorem}\label{thm3.1}
Let (H1) hold and assume that
\begin{align*}
a^{ij}\in VMO_{x,loc}([0,T]\times\R^d),\ 1\leq i,j\leq d.
\end{align*}
Then the set
\begin{align}\label{eq:3.7}
\{(\mu_t)_{\yx t \in [0,T]\xy}\in \mathcal S\mathcal P_\nu\ :\ a^{ij},\ b^{i}\in L^1([0,T]\times\R^d,\mu_t\ dt) \}
\end{align}
contains at most one element.
\end{Theorem}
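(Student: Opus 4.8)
The plan is to reduce the uniqueness statement in $\mathcal{SP}_\nu$ to an application of Proposition~\ref{prop2.4}, so the real work is to verify the density condition \eqref{eq:2.11} for solutions of the type appearing in \eqref{eq:3.7}. First I would set up the candidate convex set: given two solutions $(\mu_t), (\tilde\mu_t)$ in the set \eqref{eq:3.7}, let $\KC_\zeta$ with $\zeta = \nu$ be the convex hull of these two (or, more robustly, the set of \emph{all} elements of \eqref{eq:3.7}, which is clearly convex since \eqref{eq:3.2} is linear in $(\mu_t)$, the integrability conditions \eqref{eq:3.4}, \eqref{eq:3.5} and the membership $a^{ij}, b^i \in L^1(\mu_t\,dt)$ are preserved under convex combinations, and the mass inequality \eqref{eq:3.6} is affine). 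By Proposition~\ref{prop2.4} it then suffices to show that for each such $(\mu_t)$, the space $(\frac{\partial}{\partial s} + L_0)(D_{0,T})$ is dense in $L^1([0,T]\times\R^d, \mu_t\,dt)$.

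To prove this density, I would argue by duality: a function $\psi \in L^\infty([0,T]\times\R^d,\mu_t\,dt)$ that annihilates $(\frac{\partial}{\partial s} + L_0)(f\otimes\varphi)$ for all $f\otimes\varphi \in D_{0,T}$ must be shown to vanish $\mu_t\,dt$-a.e. Concretely, $\int_0^T\int (\partial_s + L_0)(f\otimes\varphi)\,\psi\, d\mu_s\,ds = 0$ says that $\psi\,\mu_s\,ds$ is a (signed, measure-valued) solution of the \emph{backward} (adjoint) Kolmogorov equation associated with $L_0 = a^{ij}\partial_i\partial_j + b^i\partial_i + c$ with zero terminal data at $s=T$. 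The heart of the matter is then a uniqueness statement for this dual parabolic problem, and this is exactly where hypothesis (H1) (local strict ellipticity and local boundedness of $A$) together with the VMO$_{x,\mathrm{loc}}$-regularity of $a^{ij}$ enters: these are precisely the conditions under which the interior $W^{2,1}_{p,\mathrm{loc}}$-a-priori estimates of Krylov (cf.\ \cite{K2007}) and the attendant existence/regularity theory for the Cauchy problem $(\partial_s + L_0)w = h$, $w(T,\cdot)=0$, are available, for $h$ in a dense subclass (say smooth compactly supported in $(0,T)\times\R^d$). One obtains solutions $w$ that can be approximated in the relevant topology by elements of $D_{0,T}$ (smoothing in $x$, cutting off, and adjusting the time factor to meet $f(T)=0$), and pairing $\psi$ against $(\partial_s+L_0)w$ forces $\int_0^T\int h\,\psi\,d\mu_s\,ds = 0$; since such $h$ are dense in $L^1(\mu_t\,dt)$ (using that $\mu_t\,dt$ is a finite measure on $[0,T]\times\R^d$ when restricted appropriately, and a standard exhaustion/cutoff plus mollification argument exploiting \eqref{eq:3.4}--\eqref{eq:3.5} to control the coefficients), we conclude $\psi = 0$.

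The main obstacle I anticipate is the passage between the abstract operator $L_0$ acting on $C_0^\infty(\R^d)$ (time-independent test functions times $C^1$ time factors, i.e.\ the class $D_{0,T}$) and the genuine parabolic PDE theory, which naturally produces solutions $w(s,x)$ that are \emph{not} of product form and only have Sobolev regularity. Bridging this requires (i) a careful approximation argument showing that any such $w$, truncated in $x$ and mollified, lies in the closure of $D_{0,T}$ in $L^\infty$-weak-$*$ (or in the pairing against $\mu_s\,ds$ with the coefficients as weights), and (ii) making sure the terminal condition $f(T)=0$ built into $D_{0,T}$ is compatible — which it is, since the dual problem is posed with zero data at $s=T$. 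A secondary technical point is that the coefficients $b^i$ are only in $L^2_{\mathrm{loc}}(\mu_t\,dt)$ and $c$ only in $L^1(\mu_t\,dt)$, so the approximation of $h$ and the control of error terms of the form $\int b^i\partial_i(\text{error})\,d\mu_s\,ds$ must be done with the Cauchy–Schwarz inequality using exactly \eqref{eq:3.5}, and the zero-order term handled via \eqref{eq:3.4}; these are routine but must be tracked. Once density \eqref{eq:2.11} is established, Proposition~\ref{prop2.4} delivers $\mu_t = \tilde\mu_t$ for $dt$-a.e.\ $t$, and since both have $\mu_0 = \nu$ and solve \eqref{eq:3.2}, the exceptional $dt$-null set is removed by the standard argument that $t\mapsto \int\varphi\,d\mu_t$ has an absolutely continuous version for each $\varphi\in C_0^\infty$, giving $\mu_t = \tilde\mu_t$ for all $t\in[0,T]$ and hence the claimed uniqueness in \eqref{eq:3.7}.
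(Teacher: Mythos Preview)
The paper gives no self-contained argument here: its entire proof is the citation ``See \cite[Theorem~9.3.6]{BKRS2015}.'' Your sketch is a correct outline of precisely the method that reference employs --- the Holmgren-type duality behind Proposition~\ref{prop2.4}, where one solves the backward Cauchy problem $(\partial_s + L_0)w = h$, $w(T,\cdot)=0$, for smooth compactly supported $h$, and (H1) together with the VMO hypothesis is exactly what makes Krylov's parabolic $W^{2,1}_p$ theory \cite{K2007} available for this dual problem, yielding enough regularity on $w$ to approximate by elements of $D_{0,T}$ and pair against $\psi\,\mu_t\,dt$. The technical points you flag (passing from product-form test functions to general $w(s,x)$, and controlling the first-order error terms via \eqref{eq:3.5} and the zero-order term via \eqref{eq:3.4}) are genuine and are handled in \cite[\S9.3]{BKRS2015} essentially along the lines you indicate. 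One minor slip: the mass constraint \eqref{eq:3.6} is an inequality, not an affine relation; but since it is a one-sided ``$\leq$'' it is still preserved under convex combinations, so your convexity claim for the set \eqref{eq:3.7} is unaffected.
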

\begin{proof}
See \cite[Theorem 9.3.6]{BKRS2015}.
\end{proof}

\subsection{Nondegenerate locally Lipschitz diffusion coefficients}

In this subsection and the next one we use the following condition:
\begin{enumerate}
  \item[(H2)] For every ball $U\subset\R^d$ there exists $\Lambda(U)\in(0,\infty)$ such that for all $1\leq i,j\leq d$
  \begin{align*}
  |a^{ij}(t,x)-a^{ij}{t,y}|\leq\Lambda(U)|x-y|\quad \forall t\in[0,T],\ x,y\in U.
  \end{align*}
\end{enumerate}

\begin{Theorem}\label{thm3.2}
Suppose that conditions (H1) and (H2) hold, that $c\leq0$ and that 
$b\in L^p_{loc}([0,T]\times\R^d, dt\ dx;\R^d)$, $c\in L^{\frac{p}{2}}_{loc}([0,T]\times\R^d,dt\ dx)$ 
for some $p>d+2$. Assume also that there exists $(\mu_t)_{t\geq0}\in \mathcal S\mathcal P_\nu$ satisfying the condition
\begin{align*}
|a^{ij}|/(1+|x|^2)+|b^{i}|/(1+|x|)\in L^1([0,T]\times\R^d, \mu_{t} dt), 1 \leq i, j \geq d.
\end{align*}
Then the set $\mathcal S\mathcal P_\nu$ consists of exactly one element.
\end{Theorem}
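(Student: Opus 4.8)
The plan is to reduce Theorem~\ref{thm3.2} to the already-cited uniqueness results for FPKEs in \cite[Chapter~9]{BKRS2015}. The statement asserts that $\mathcal S\mathcal P_\nu$ has exactly one element, so there are two things to show: existence of a solution in $\mathcal S\mathcal P_\nu$, and uniqueness of such a solution. Existence is, in effect, \emph{assumed} in the hypothesis (``there exists $(\mu_t)_{t\ge0}\in\mathcal S\mathcal P_\nu$ satisfying \dots''), so the real content is uniqueness under the integrability condition $|a^{ij}|/(1+|x|^2)+|b^i|/(1+|x|)\in L^1([0,T]\times\R^d,\mu_t\,dt)$. Thus the first step is simply to record that the solution furnished by the hypothesis lies in $\mathcal S\mathcal P_\nu$ and to reduce everything to: any two elements of $\mathcal S\mathcal P_\nu$ coincide.

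First I would verify that the standing assumptions of the relevant theorem in \cite[Chapter~9]{BKRS2015} (most plausibly Theorem~9.4.3 or its analogue, the locally-Lipschitz / Sobolev-regularity uniqueness theorem) are met: (H1) gives local strict ellipticity and local boundedness of $A$; (H2) gives local Lipschitz continuity of the $a^{ij}$, hence in particular $a^{ij}\in W^{1,p}_{loc}$ for every $p$; the hypotheses $b\in L^p_{loc}(dt\,dx;\R^d)$ and $c\in L^{p/2}_{loc}(dt\,dx)$ with $p>d+2$ are exactly the subcritical Krylov-type integrability conditions under which the associated parabolic equation has the regularity needed to run the uniqueness argument; and $c\le0$ ensures the solutions stay subprobabilities, compatibly with the mass bound \eqref{eq:3.6}. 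The point of the extra global integrability condition on the constructed $(\mu_t)$ is to control the behaviour at infinity so that the density argument (in the spirit of Proposition~\ref{prop2.4} and \eqref{eq:2.11}) applies: it guarantees that the coefficients are integrable against $\mu_t\,dt$ in the weighted sense needed to justify testing the equation against cutoff-times-smooth functions and passing to the limit.

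The core of the proof is then to invoke the uniqueness statement of \cite[Chapter~9]{BKRS2015} directly: under (H1), (H2), the stated integrability of $b$ and $c$, and the existence of one solution with the weighted $L^1$ bound, any solution in $\mathcal S\mathcal P_\nu$ equals that one. So the ``proof'' is essentially a citation plus a check that our definition of $\mathcal S\mathcal P_\nu$ (conditions \eqref{eq:3.3}--\eqref{eq:3.6}) matches the class of solutions considered there; this matching — in particular that \eqref{eq:3.5} and \eqref{eq:3.6} are exactly what that reference calls, respectively, the local $L^2$-condition on the drift and the mass-subinvariance inequality forced by $c\le0$ — is the one slightly delicate bookkeeping point. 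I would write: \emph{See \cite[Theorem~9.4.3]{BKRS2015}} (with the precise number to be confirmed against the reference), together with one or two sentences reconciling notation.

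The main obstacle I anticipate is not a deep one but a matter of locating the exact theorem in \cite{BKRS2015} whose hypotheses coincide with (H1)+(H2)+the subcritical $L^p$/$L^{p/2}$ conditions, since that chapter contains several closely related uniqueness results distinguished by fine differences in the assumed regularity of $A$ (Hölder vs.\ VMO vs.\ Sobolev vs.\ Lipschitz) and in the growth conditions allowed on $b$. Once the right statement is identified, the argument is immediate; the only genuine verification is that the weighted $L^1$-integrability hypothesis on the given solution is precisely the condition needed there to rule out loss of mass / non-uniqueness driven by fast growth of the coefficients at infinity. I would flag that as the single substantive point and otherwise keep the proof to a pointer to \cite[Chapter~9]{BKRS2015}.
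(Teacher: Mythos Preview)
Your proposal is correct and coincides with the paper's own proof, which consists solely of the reference ``See \cite[Theorem~9.4.3]{BKRS2015}.'' You have even identified the precise theorem number, so the only difference is that you surround the citation with explanatory bookkeeping that the paper omits.
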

\begin{proof}
See \cite[Theorem 9.4.3]{BKRS2015}.
\end{proof}

\subsection{Nondegenerate diffusion coefficients and the Lyapunov function condition}

The function $V$ in the following theorem is called a Lyapunov function.

\begin{Theorem}\label{thm3.3}
Suppose that conditions (H1) and (H2) hold, $c\leq0$ and that $b\in L^p_{loc}([0,T]\times\R^d, dt\ dx;\R^d)$, 
$c\in L^{\frac{p}{2}}_{loc}([0,T]\times\R^d,dt\ dx)$ for some $p>d+2$. Suppose also that there exists a positive 
function $V\in C^2(\R^d)$ such that $V(x)\to+\infty$ as $|x|\to+\infty$ and for some $C \in (0, \infty)$ and all $(t, x)\in[0,T]\times\R^d$ we have
\begin{align*}
L_{0}V(t,x)\leq C+CV(x).
\end{align*}
Then the set $\mathcal S\mathcal P_\nu$ contains at most one element.
\end{Theorem}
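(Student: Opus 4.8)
The plan is to reduce this theorem to the uniqueness results that already exist in \cite[Chapter 9]{BKRS2015}, specifically to the version of Theorem 9.4.3 that is valid under an integrability-type Lyapunov condition rather than the a~priori moment bound of Theorem \ref{thm3.2}. Concretely, I would invoke \cite[Theorem 9.4.6]{BKRS2015} (the Lyapunov-function uniqueness theorem for subprobability solutions), which asserts exactly that, under (H1), (H2), $c \le 0$, the stated local integrability of $b$ and $c$, and the existence of a positive $V \in C^2(\R^d)$ with $V(x) \to +\infty$ as $|x| \to \infty$ and $L_0 V \le C + CV$ on $[0,T] \times \R^d$, the class $\mathcal S\mathcal P_\nu$ contains at most one element. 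Since the hypotheses of Theorem \ref{thm3.3} are literally the hypotheses of that result (with $V$ playing the role of the Lyapunov function), the proof is a one-line reduction, just as in the proofs of Theorems \ref{thm3.1} and \ref{thm3.2}.

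If instead one wants a self-contained argument, the strategy is the standard Lyapunov-truncation device. First, given two solutions $(\mu_t), (\tilde\mu_t) \in \mathcal S\mathcal P_\nu$, form their convex combination $\nu_t := \tfrac12 \mu_t + \tfrac12 \tilde\mu_t$, which again lies in $\mathcal S\mathcal P_\nu$, and write $\mu_t\,dt = g\,\nu_t\,dt$, $\tilde\mu_t\,dt = \tilde g\,\nu_t\,dt$ with $g,\tilde g$ bounded, exactly as in the proof of Proposition \ref{prop2.4}. The difference $h := g - \tilde g$ then satisfies the weak equation $\int_0^T \int (\partial_s + L_0)(f\otimes\varphi)\, h\, d\nu_s\, ds = 0$ for all $f\otimes\varphi \in D_{0,T}$. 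The task is to upgrade the admissible test functions from the product class $D_{0,T}$ to a class large enough to force $h = 0$; by Proposition \ref{prop2.4} it suffices to show that $(\partial_s + L_0)(D_{0,T})$ is dense in $L^1([0,T]\times\R^d, \nu_t\,dt)$. This density is where the Lyapunov function $V$ and the nondegeneracy hypotheses (H1), (H2) enter: one solves the terminal-value parabolic problem $\partial_s w + L_0 w = \psi$, $w(T,\cdot) = 0$, for $\psi$ in a dense subclass, obtains interior Sobolev estimates for $w$ from (H1), (H2) and the assumed $L^p_{loc}$ integrability of $b,c$ with $p > d+2$ (this is precisely the regime in which the parabolic $W^{2,1;p}_{loc}$-theory and the resulting H\"older continuity of $\nabla w$ are available), and uses $V$ together with the bound $L_0 V \le C + CV$ to control the contribution near spatial infinity, where the test functions are cut off.

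The main obstacle, and the step I expect to be genuinely hard, is this last density claim: showing that solutions of the backward equation can be approximated in $L^1(\nu_t\,dt)$ by elements of the product class $D_{0,T} = \{ f\otimes\varphi : \varphi \in C_0^\infty(\R^d),\ f \in C^1([0,T]),\ f(T)=0\}$. Approximating a genuinely time-and-space dependent solution $w(s,x)$ by finite sums $\sum_k f_k(s)\varphi_k(x)$ requires both an interior regularity/compactness argument (to approximate on balls, using the a~priori parabolic estimates and the Arzel\`a–Ascoli theorem on the H\"older-continuous derivatives) and a tightness argument at infinity controlled by $V$ and the super-Lyapunov inequality; reconciling the two, and in particular handling the fact that the coefficients $a^{ij}, b^i, c$ are only locally regular, is the technical heart. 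For the purposes of this paper I would simply cite \cite[Theorem 9.4.6]{BKRS2015}, where all of this is carried out, and remark that combined with Theorem \ref{thm2.4} it yields the Markov-uniqueness corollary stated in the next section.
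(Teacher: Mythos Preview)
Your proposal is correct and matches the paper's own proof exactly: the paper's entire argument for Theorem \ref{thm3.3} is the single line ``See \cite[Theorem 9.4.6]{BKRS2015},'' which is precisely the reduction you propose. Your additional sketch of the self-contained Lyapunov-truncation argument is a reasonable summary of what happens inside that reference, but for the purposes of this paper the one-line citation is all that is needed.
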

\begin{proof}
See \cite[Theorem 9.4.6]{BKRS2015}.
\end{proof}

\begin{Example}\label{example3.4}
Let $V(x)=\ln(|x|^2+1)$ if $|x|>1$. Then the condition $L_{0}V\leq C+CV$ is equivalent to the inequality
\begin{align}\label{eq: 3.8}
2\ trA(t,x)&-4\frac{\langle A(t,x)x,x\rangle}{|x|^2+1}+c(t,x)(|x|^2+1)\ln(|x|^2+1)+2\langle b(t,x),x\rangle\\
&\leq C(|x|^2+1)+C(|x|^2+1)ln(|x|^2+1)\notag.
\end{align}
\end{Example}

\begin{proof}
See \cite[Theorem 9.4.7]{BKRS2015}.
\end{proof}

\subsection{Degenerate diffusion coefficients}

\subsubsection{A uniqueness result of LeBris/Lions}
Here we assume that $c = 0$ in \eqref{eq:3.1}, i.e., we consider a partial  differential operator of the form
\begin{align}
L_{0}\varphi=a^{ij}\partial_{x_i}\partial_{x_j}\varphi+b^i\partial_{x_i}\varphi, \quad \varphi\in D_0:=C_0^\infty(\R^d),
\end{align}
where $a^{ij}, b^{i}, 1 \leq i, j\leq d$, are as in \eqref{eq:3.1}, and its corresponding FPKE (\ref{eq:3.2}).

Let $\sigma^{ij}\colon [0, T] \times \R^{d} \to \R$ be $\BC (\R^{d})$-measurable functions such that $A = \sigma \sigma^{*}$,
 where $\sigma := (\sigma^{ij})_{1 \geq i, j \geq d}$. Set 
 $$
 \beta^i := b^i - \partial_{x_j} a^{ij}, \ 1 \leq i, j \leq d.
$$
The following result is due to C. LeBris and P.L. Lions (see \cite[Proposition 5]{LBL2008}, and also \cite[Theorem 9.8.1]{BKRS2015}).

\begin{Theorem}\label{thm3.5}
Suppose that in the natural notation
\begin{align*}
\sigma^{ij} \in L^{2} ([0, T]; W_{loc}^{1, 2}(\R^{d}, dx)), \quad \beta^{i} \in L^{1} ([0, T]; W_{loc}^{1, 1}(\R^{d}, dx)),
\end{align*}
\begin{align*}
div\; \beta \in L^{1}([0, T]; L^{\infty}(\R^{d}, dx)),\; \; \frac{|\beta|}{1 + |x|} \in L^{1}([0, T]; L^{1}(\R^{d}, dx)) + L^{1} ([0, T]; L^{\infty}(\R^{d}, dx)),
\end{align*}
\begin{align*}
\frac{\sigma^{ij}}{1 + |x|} \in L^{2}([0, T];\;  L^{2}(\R^{d}, dx)) + L^{2} ([0, T]; L^{\infty}(\R^{d}, dx)).
\end{align*}
Then, for every initial condition given by density $\rho_{0}$ from $L^{1}(\R^{d}, dx) \cap L^{\infty} (\R^{d}, dx)$ 
there exists a unique solution to \eqref{eq:3.2} with $\mu_0 := \rho_{0} dx$ in the class
\begin{align*}
\{
 \rho\colon \rho \in L^{\infty} ([0, T]; L^{1}(\R^{d}, dx) \cap L^{\infty}(\R^{d}, dx)), \sigma^{*} \nabla \rho \in L^{2} ([0, T]; L^{2}(\R^{d}, dx)).
\}
\end{align*}
\end{Theorem}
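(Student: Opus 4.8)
The plan is to view the FPKE \eqref{eq:3.2} as a degenerate parabolic equation for the density and to run a DiPerna--Lions-type renormalization argument, following Le Bris--Lions \cite{LBL2008} (see also \cite[Theorem 9.8.1]{BKRS2015}). Writing $\mu_t=\rho(t,\cdot)\,dx$, equation \eqref{eq:3.2} with $c=0$ becomes, in the sense of distributions on $(0,T)\times\R^d$,
\begin{align*}
\partial_t\rho=\partial_{x_i}\partial_{x_j}(a^{ij}\rho)-\partial_{x_i}(b^i\rho)=\mathrm{div}\big(\sigma\,(\sigma^*\nabla\rho)\big)-\mathrm{div}(\beta\rho),
\end{align*}
using $A=\sigma\sigma^*$ and $\beta^i=b^i-\partial_{x_j}a^{ij}$. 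The regularity hypotheses are exactly what makes each term meaningful once $\sigma^*\nabla\rho\in L^2([0,T];L^2)$ and $\rho\in L^\infty([0,T];L^1\cap L^\infty)$. For \textbf{existence} I would mollify the coefficients (replacing $\sigma$ by $\sigma*\theta_\eps+\eps I$ and $\beta$ by $\beta*\theta_\eps$ with a standard mollifier $\theta_\eps$), solve the resulting uniformly parabolic equation with smooth coefficients classically to get $\rho_\eps\ge0$, and then derive the a priori bounds: testing with $\rho_\eps$ gives, after integrating by parts, $\tfrac{d}{dt}\|\rho_\eps(t)\|_2^2+2\|\sigma_\eps^*\nabla\rho_\eps(t)\|_2^2\le\|\mathrm{div}\,\beta_\eps(t)\|_\infty\|\rho_\eps(t)\|_2^2$, while a maximum-principle (or $L^p$, $p\to\infty$) argument gives $\|\rho_\eps(t)\|_\infty\le\|\rho_0\|_\infty\exp(\int_0^t\|\mathrm{div}\,\beta_\eps\|_\infty)$; the splitting hypotheses on $|\beta|/(1+|x|)$ and $\sigma/(1+|x|)$ rule out loss of mass at infinity. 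Grönwall and $\mathrm{div}\,\beta\in L^1([0,T];L^\infty)$ give uniform bounds in $L^\infty([0,T];L^1\cap L^\infty)$ and for $\sigma_\eps^*\nabla\rho_\eps$ in $L^2([0,T];L^2)$, and one passes to the limit along a subsequence in the (linear) equation by weak-$*$ compactness; to identify the limit of $\sigma_\eps^*\nabla\rho_\eps$ one uses strong $L^2_{loc}$ convergence of $\sigma_\eps$ and $\nabla\sigma_\eps$ together with the identity $\sigma_\eps^*\nabla\rho_\eps=\mathrm{div}(\sigma_\eps^*\rho_\eps)-(\mathrm{div}\,\sigma_\eps^*)\rho_\eps$, each term of which passes to the limit distributionally.

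The heart is \textbf{uniqueness}, proved by renormalization. Let $\rho$ be any solution in the stated class and set $\rho^\eps:=\rho(t,\cdot)*\theta_\eps$. Convolving the equation,
\begin{align*}
\partial_t\rho^\eps=\mathrm{div}\big(\sigma(\sigma^*\nabla\rho^\eps)\big)-\mathrm{div}(\beta\rho^\eps)+R_\eps,
\end{align*}
where $R_\eps$ collects two DiPerna--Lions commutators: the first-order commutator $\mathrm{div}(\beta\rho)*\theta_\eps-\mathrm{div}(\beta\rho^\eps)$, which tends to $0$ in $L^1_{loc}$ since $\beta\in L^1([0,T];W^{1,1}_{loc})$ and $\rho\in L^\infty_{loc}$, and the second-order commutator from $\mathrm{div}(\sigma(\sigma^*\nabla\rho))$, which tends to $0$ in $L^1_{loc}$ since $\sigma\in L^2([0,T];W^{1,2}_{loc})$ and $\sigma^*\nabla\rho\in L^2([0,T];L^2)$ — this is precisely why one assumes $A=\sigma\sigma^*$ with $\sigma\in W^{1,2}_{loc}$ rather than $A\in W^{2,1}_{loc}$. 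Since for $x$-smooth $\rho^\eps$ the chain rule is classical, letting $\eps\to0$ yields, for every convex $\gamma\in C^2(\R)$ with $\gamma'$ bounded, the renormalized equation
\begin{align*}
\partial_t\gamma(\rho)=\mathrm{div}\big(\gamma'(\rho)\sigma(\sigma^*\nabla\rho)\big)-\gamma''(\rho)\,|\sigma^*\nabla\rho|^2-\mathrm{div}\big(\beta\,\gamma(\rho)\big)+(\mathrm{div}\,\beta)\big(\gamma(\rho)-\rho\gamma'(\rho)\big),
\end{align*}
in which the dissipation term $-\gamma''(\rho)|\sigma^*\nabla\rho|^2$ is $\le0$.

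To conclude, let $\rho_1,\rho_2$ be two solutions with the same initial datum; by linearity of the equation in $\rho$, the difference $w=\rho_1-\rho_2$ solves the same equation with $w(0)=0$ and lies in the same class. Apply the renormalized equation with $\gamma_k$ a convex $C^2$ approximation of $r\mapsto r^2$ with bounded derivative, integrate over $\R^d$ against cutoffs $\chi(x/R)$, and let first $R\to\infty$ (the boundary terms vanish using $w\in L^\infty([0,T];L^1\cap L^\infty)$, $\sigma^*\nabla w\in L^2([0,T];L^2)$ and the growth hypotheses on $|\beta|/(1+|x|)$ and $\sigma/(1+|x|)$) and then $k\to\infty$, to obtain
\begin{align*}
\frac{d}{dt}\int_{\R^d}w^2\,dx=-2\int_{\R^d}|\sigma^*\nabla w|^2\,dx-\int_{\R^d}(\mathrm{div}\,\beta)\,w^2\,dx\le\|\mathrm{div}\,\beta(t)\|_\infty\int_{\R^d}w^2\,dx.
\end{align*}
Since $\mathrm{div}\,\beta\in L^1([0,T];L^\infty)$ and $\int w^2\,dx=0$ at $t=0$, Grönwall forces $w\equiv0$, i.e.\ $\mu^1_t=\mu^2_t$ for a.e.\ $t$. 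I expect the main obstacle to be the \emph{second-order commutator estimate} for the degenerate diffusion term written as $\mathrm{div}(\sigma\sigma^*\nabla\rho)$ with only $\sigma^*\nabla\rho$ (not $\nabla\rho$) controlled in $L^2$ — the genuinely new point beyond classical transport renormalization — together with the careful truncation-at-infinity argument that justifies the global integration by parts; both are carried out in \cite{LBL2008}.
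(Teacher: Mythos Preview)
Your sketch is correct and faithfully reproduces the Le~Bris--Lions renormalization argument; note, however, that the paper does not give its own proof of this theorem but simply cites \cite[Proposition~5]{LBL2008} and \cite[Theorem~9.8.1]{BKRS2015}, so there is nothing to compare beyond observing that you have outlined exactly the proof contained in those references.
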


\subsubsection{Uniqueness in the class of absolutely continuous paths of probability measures}

Here we assume
\begin{enumerate}
  \item[(H3)] (H1) is satisfied with $\gamma=\gamma(U),\ M=M(U)$, independent of the ball $U$ and $(t,x)\mapsto A(t,x)$ is 
  Lipschitz in $t$ and $x$ on $[0,T]\times\R^d$, \yx $T > 0$. \xy
  \item[(H4)] $b\in L^\infty([0,\infty\yx)\xy\times\R^d,dt\ dx;\R^d)$.
\end{enumerate}
Furthermore, we fix a $\BC([0,T]\times\R^d)$-measurable non-negative function $\tilde\rho\colon [0,\yx \infty)\xy\times\R^d\to[0,\infty).$

Consider the operator
\begin{align}\label{eq:3.8}
L_0 \varphi=\tilde\rho\ div(A\nabla \varphi)+\sqrt{\tilde\rho}\langle b,\nabla \varphi\rangle,\ \varphi\in D_0:=C_0^\infty(\R^d).
\end{align}
and its corresponding FPKE \eqref{eq:3.2}.

Define $\mathcal Z_\nu$ to be the set of all $(\mu_t)_{\yx t \in [0,T]\xy}\in\mathcal S\mathcal P\yx(T)\xy$ such that $\mu_0=\nu$ \yx and  
$\mu_t\ $dt is absolutely continuous w.r.t. dxdt with density \xy $z:=\frac{d(\mu_t\ dt)}{dx\ dt}$ \yx satisfying \xy the following properties:
\begin{align}
&(\mu_t)_{\yx t\in [0,T] \xy} \text{ solves the FPKE corresponding to \eqref{eq:3.8}.}\label{eq:3.9}\\
&\mu_t(\R^d)=1 \text{ for $dt$-a.e. }t\in[0,T].\label{eq:3.10}\\
&\tilde\rho z\in L^2([0,T]\times U, dt\ dx) \text{ for all balls }U\subset \R^d.\label{eq:3.11}\\
\lim_{N\to\infty}\int_0^T&\int_{N\leq|x|\leq2N}\left[\frac{\sqrt{\tilde\rho(t,x)}+\tilde\rho(t,x)}{1+|x|}z(t,x)
+\frac{\tilde\rho^2(t,x)}{1+|x|^2}z^2(t,x)\right]dx\ dt=0. \label{eq:3.12}
\end{align}

\begin{Theorem}\label{thm3.5b}
Suppose that (H3) and (H4) hold. Then $\mathcal Z_\nu$ contains at most one element.
\end{Theorem}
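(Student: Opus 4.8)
The plan is to reduce Theorem \ref{thm3.5b} to an application of Proposition \ref{prop2.4} (the abstract uniqueness criterion of \cite{BDPRSt2007}), by verifying that $\mathcal Z_\nu$ is a non-empty convex subset of $\mathcal{SP}(T)$ consisting of solutions to the FPKE for the operator \eqref{eq:3.8} with fixed initial datum $\mu_0=\nu$, and that along each such solution the density property \eqref{eq:2.11} holds, i.e.\ that $(\partial_s+L_0)D_{0,T}$ is dense in $L^1([0,T]\times\R^d,\mu_t\,dt)$. Convexity is immediate: all four defining conditions \eqref{eq:3.9}--\eqref{eq:3.12} are preserved under convex combinations, the only non-trivial check being \eqref{eq:3.12}, which follows since $z\mapsto z^2$ is convex (so the quadratic term of a midpoint is dominated by the average of the quadratic terms) and all the other terms are linear in $z$. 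Non-emptiness is not needed for the "at most one" conclusion (Proposition \ref{prop2.4} only compares two elements of $\KC_\zeta$, and if $\mathcal Z_\nu$ is empty the statement is vacuous), so I would mention it only in passing.

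The substantive work is the density condition \eqref{eq:2.11}. Here I would follow the standard approach used throughout Chapter 9 of \cite{BKRS2015}: instead of proving density of $(\partial_s+L_0)D_{0,T}$ directly, one uses the structure of $L_0=\tilde\rho\,\mathrm{div}(A\nabla\cdot)+\sqrt{\tilde\rho}\,\langle b,\nabla\cdot\rangle$ together with the regularization/integrability bounds built into $\mathcal Z_\nu$. Concretely, for a solution $(\mu_t)_{t\in[0,T]}\in\mathcal Z_\nu$ with density $z$, conditions \eqref{eq:3.11} and \eqref{eq:3.12} are exactly the weighted integrability bounds (local $L^2$ control of $\tilde\rho z$, and decay of $(\sqrt{\tilde\rho}+\tilde\rho)z/(1+|x|)$ and $\tilde\rho^2 z^2/(1+|x|^2)$ at infinity) that, under the ellipticity and Lipschitz regularity (H3) and the boundedness (H4), allow one to approximate an arbitrary $h\in L^1([0,T]\times\R^d,\mu_t\,dt)$ by elements of the form $(\partial_s+L_0)(f\otimes\varphi)$. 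The approximation is done in two stages: first a cutoff in space (using the decay \eqref{eq:3.12} to make the tails negligible and a Lyapunov-type argument with $V(x)=\log(1+|x|^2)$ or $1+|x|^2$, as in Example \ref{example3.4}), reducing to functions supported in a ball $U$; then, on $[0,T]\times U$, one appeals to the non-degeneracy $\gamma I\le A\le M I$ and the parabolic regularity for the (now uniformly elliptic, Lipschitz-coefficient) operator $\partial_s+L_0$ on the ball to get $L^2$-density, which upgrades to $L^1(\mu_t\,dt)$-density via the local bound $\tilde\rho z\in L^2_{loc}$ from \eqref{eq:3.11} and Cauchy–Schwarz. This is precisely the argument of \cite[Theorem 9.6.x]{BKRS2015}; I would cite it rather than reproduce it.

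Once \eqref{eq:2.11} is established for every $(\mu_t)\in\mathcal Z_\nu=:\KC_\nu$, Proposition \ref{prop2.4} gives that any two elements of $\mathcal Z_\nu$ agree for $dt$-a.e.\ $t\in[0,T]$; since elements of $\mathcal{SP}(T)$ are (by the absolute-continuity requirement) determined by their densities, and the densities are weak solutions of a parabolic equation hence have $dt$-continuous representatives, this yields $\mu_t=\tilde\mu_t$ for all $t$, i.e.\ $\mathcal Z_\nu$ has at most one element.

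**The main obstacle** I expect is the verification of the density condition \eqref{eq:2.11} — specifically, checking that the particular weighted integrability bounds \eqref{eq:3.11}--\eqref{eq:3.12} imposed on $\mathcal Z_\nu$ are exactly the ones needed to run the cutoff-plus-local-parabolic-regularity argument for the degenerate-at-infinity operator $\tilde\rho\,\mathrm{div}(A\nabla\cdot)+\sqrt{\tilde\rho}\langle b,\nabla\cdot\rangle$, and that the Lyapunov bound at infinity closes. Since $\tilde\rho$ is only assumed Borel and non-negative (the degeneracy is carried entirely by $\tilde\rho$, while $A$ stays uniformly elliptic by (H3)), the delicate point is controlling the drift term $\sqrt{\tilde\rho}\langle b,\nabla\varphi\rangle$ and the second-order term $\tilde\rho\,\mathrm{div}(A\nabla\varphi)$ simultaneously in the tail — this is what dictates the precise form of \eqref{eq:3.12}. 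In the write-up I would lean on \cite[Chapter 9]{BKRS2015} for this step and keep the novel content to the (easy) translation between $\MC_{sym,M}$-uniqueness and $\mathcal Z_\nu$-uniqueness via Theorem \ref{thm2.4} and Proposition \ref{prop2.4}.
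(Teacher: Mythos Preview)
Your proposal is correct and aligns with the paper's approach: the paper's own proof is a one-line citation of \cite[Theorem 9.8.2]{BKRS2015}, and what you have sketched (reduce to the abstract convex-uniqueness criterion of Proposition~\ref{prop2.4}, check convexity of $\mathcal Z_\nu$, then verify the $L^1$-density condition \eqref{eq:2.11} via cutoff plus local parabolic regularity using \eqref{eq:3.11}--\eqref{eq:3.12}) is precisely the content of that cited theorem. In effect you have unpacked the citation one layer; since you also plan to cite \cite[Chapter~9]{BKRS2015} for the hard density step, the two proofs end up identical in substance.
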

\begin{proof}
This follows from \cite[Theorem 9.8.2]{BKRS2015}.
\end{proof}

\section{Applications to the Markov uniqueness problem}

\subsection{The Framework}

Also in this section we take $(E, \BC) := (\R^{d}, \BC(\R^{d}))$ and $m := \rho\; dx$, where $$
\rho \in L^{1}_{loc} (\R^{d}, dx),
\ \rho > 0 \; \hbox{$dx$-a.e.}
$$
 We consider the following partial differential operator:
\begin{align}\label{eq:4.1}
L_0 \varphi = \frac{1}{\rho}\partial_{x_{i}}(\rho\ a^{ij}\partial_{x_{j}} \varphi) + c\; \varphi,\quad \varphi \in D_{0} := C_{0}^{\infty}(\R^{d}),
\end{align}
where $a^{ij}, 1 \leq i, j \leq d$, and $c$ satisfy assumption (A) below, which we assume to hold throughout this section:

\begin{enumerate}
\item[(A)]
$a^{ij}, c : \R^{d} \to \R$ are $\BC(\R^{d})$ measurable, $c \leq 0$, and $A(x) := (a^{ij}(x))_{1 \leq i, j \leq d}$ is a 
nonnegative definite matrix for all $x \in \R^{d}$.
Furthermore,
$$a^{ij} \in W_{loc}^{1, 1}(\R^{d}, dx) \cap L^{2}_{loc}(\R^{d}, \rho\; dx); c, \partial_{x_{i}} a^{ij} \in L^{2}_{loc}(\R^{d}, \rho\; dx), \\
\rho^{\frac{1}{2}} \in W_{loc}^{1, 1}(\R^{d}, dx)
$$
 such that
 $$a^{ij} \rho^{-\frac{1}{2}} \partial_{x_{i}} \rho^{\frac{1}{2}} \in L_{loc}^{2}(\R^{d}, \rho\; dx)
 $$
  for all $1 \leq i, j \leq d.$
\end{enumerate}

\begin{Remark}\label{Remark4.1}
{\rm
We note that (A) is a standard a priori assumption on $L_{0}$ in \eqref{eq:4.1}, because it implies the following:
\item[(i)] for every $\varphi \in C_{0}^{\infty}(\R^{d})$
\begin{align}\label{eq:4.2}
L_{0} \varphi = a^{ij} \partial_{x_{i}} \partial_{x_{j}} \varphi + (\partial_{x_{i}}a^{ij}) \partial_{x_{j}} \varphi 
+ 2 \rho^{-\frac{1}{2}} \partial_{x_{i}} \rho^{\frac{1}{2}} a^{ij} \partial_{x_{j}} \varphi + c \varphi,
\end{align}
and $(L_{0}, C_{0}^{\infty}(\R^{d}))$ is symmetric on $L^{2}(\R^{d}, \rho \; dx)$, i.e., $L_{0} \subset L_{0}^{*}$,
 where the adjoint is taken in $L^{2}(\R^{d}, \rho \; dx)$.

\item[(ii)] The nonnegative definite  symmetric bilinear from
\begin{align*}
\mathcal{E}_{0}(\psi, \varphi) :&= - \int \psi\ L_{0}\ \hspace{-0.1em}\varphi\ \rho \;  dx \\
					&= \int \langle A \nabla \psi, \nabla \varphi \rangle_{\R^{d}}\ 							 
\rho \; dx - \int c\ \psi\ \varphi\ \rho\ dx; \;\; \psi, \varphi 							 \in C_{0}^{\infty}(\R^{d}),
\end{align*}
is a symmetric pre-Dirichlet form, hence its closure $(\mathcal{E}_{F}, D(\mathcal{E}_{F}))$ is a symmetric Dirichlet form, 
whose corresponding generator $(-L_{F}, D(L_{F}))$ is just the Friedrichs extension of $(L_{0}, C_{0}^{\infty}(\R^{d}))$. 
In particular, $T_{t}^{L_{F}} := e^{tL_{F}}, t \geq 0$, is sub-Markovian. We refer to \cite[Section 3.3]{FOT2011} a
nd \cite[Chapter II, Section 1a) and 1c)]{MR1992} for details on the standard proofs for the above claims. 
In particular, for $(L_{0}, D_{0})$ as above
\begin{align*}
\MC_{sym, M}(L_{0}, D_{0}) \neq \emptyset.
\end{align*}
}\end{Remark}

Below we shall present various sets of additional assumptions on $a^{ij}, 1 \leq i, j \leq d$, and $c$ 
so that a respective theorem from the previous section will apply to imply
\begin{align*}
\#\; \MC_{sym, M}(L_{0}, D_{0}) = 1,
\end{align*}
i.e., to imply Markov uniqueness for $(L_0, C_{0}^{\infty}(\R^d))$ on $L^{2}(\R^d, \rho\ dx)$. 
We briefly repeat the set-up in each subsection to ease selective reading.

\begin{Remark}\label{Remark4.2}
\rm
As mentioned above, we only consider time-independent coefficients for the operator in \eqref{eq:4.1} 
and assume symmetry of $L_0$ on some weighted $L^2$-space over $\R^d$. As shown in Section 2, 
however, our approach is much more general and could be applied also to non-symmetric cases and for more 
general state spaces than merely $E = \R^{d}$. By time-space homogenization one can also find applications 
of the theorems in Section 3 to the cases of time-dependent coefficients (and the associated generalized 
Dirichlet forms; see \cite{St1999} and \cite{T2000}. A starting point for the nonsymmetric case could 
be the case of an operator $L_0$ as in \eqref{eq:3.1} with time-independent coefficients and with $c \equiv 0$, 
which has an infinitesimally invariant measure $\mu$, or equivalently has a stationary solution $\mu$ to its 
corresponding FPKE \eqref{eq:3.2}. This case has been studied intensively in \cite{BKRS2015} in Chapters 1-5. 
In particular, it has been shown there that under broad conditions $\mu$ has a reasonably regular density with 
respect to Lebesgue measure and $L_0$ can be written as the sum of a symmetric operator $L_{sym}$ on $L^{2}(\R^{d}, \mu)$ 
and a vector field $b$ which has divergence zero with respect to $\mu$. In this case
$L_{0}^{*}$ on $L^{2}(\R^{d}, \mu)$, calculated on $D_{0}(= C_{0}^{\infty}(\R^{d}))$, 
is just given by $L_{sym}-\langle b, \nabla\rangle_{\R^{d}}$ and then one can proceed 
analogously as in the symmetric case to obtain Markov uniqueness results in this nonsymmetric case, 
which falls into the class (1) introduced in the Introduction.
\end{Remark}

\subsection{Nondegenerate VMO diffusion coefficients}

Let $(L_0, D_0)$ be as in \eqref{eq:4.1} (respectively, \eqref{eq:4.2}) and assume
that assumption (A) holds. Let
$$\MC_{sym, M} := \MC_{sym, M}(L_0, D_0)$$
be as defined in Section 1.

\begin{Theorem}\label{thm4.3}
Suppose (A) and (H1) hold and that $a^{ij} \in VMO_{x, loc}([0, T] \times \R^{d}), \\ 1 \leq i, j \leq d$. 
Additionally, assume that for $1 \leq i, j \leq d$
\begin{align}\label{eq:4.3}
a^{ij}, \partial_{x_{i}}a^{ij} + a^{ij} \rho^{-\frac{1}{2}} \partial_{x_{i}} \rho^{\frac{1}{2}}, \; 
c \in L^1(\R^d, \rho \; dx) + L^{\infty}(\R^{d}, \rho \; dx).
\end{align}
Then $$\MC_{sym, M} = \{L_{F} \},$$ \\
i.e. Markov uniqueness holds for $(L_0, C_0^\infty(\R^d))$ on $L^{2}(\R^{2}, \rho\ dx)$.
\end{Theorem}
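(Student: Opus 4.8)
The plan is to deduce Theorem~\ref{thm4.3} from Theorem~\ref{thm2.4} by verifying that, for every probability density $u \in L^1 \cap L^\infty$, the FPKE \eqref{eq:2.9} has a unique solution in $\mathcal S\mathcal P$ among the $m$-absolutely continuous paths starting at $u\cdot m$, and that this solution exists (which it does, namely $\mu^{L_F}_t = T^{L_F}_t u \cdot m$, since $\MC_{sym,M}(L_0,D_0)\neq\emptyset$ by Remark~\ref{Remark4.1}(ii) and in fact contains $L_F$). To do this I would invoke Theorem~\ref{thm3.1} (the VMO uniqueness result from \cite[Theorem 9.3.6]{BKRS2015}), so the work is entirely in translating between the two formulations: rewriting the symmetric operator $L_0$ of \eqref{eq:4.1} in the non-divergence form \eqref{eq:4.2}, reading off its coefficients $a^{ij}$, $b^i := \partial_{x_i}a^{ij} + 2\rho^{-1/2}(\partial_{x_i}\rho^{1/2})a^{ij}$, $c$ (time-independent, hence trivially VMO in $t$), and checking that the relevant solution $(\mu^{L_F}_t)_{t\in[0,T]}$ lies in the class $\mathcal S\mathcal P_\nu$ of Subsection~3.1 and in the subclass \eqref{eq:3.7} where $a^{ij}, b^i \in L^1([0,T]\times\R^d, \mu_t\,dt)$.

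The key steps, in order, are as follows. \emph{Step 1.} Fix a probability density $u\in L^1(m)\cap L^\infty(m)$; note $m = \rho\,dx$, so $\mu_t^{L_F} = T^{L_F}_t u\cdot \rho\,dx$ has, for each $t$, a density $\leq \|u\|_\infty$ with respect to $dx$ on every ball (since $\rho\in L^1_{loc}$), hence $\mu^{L_F}_t \ll dx$ locally; by sub-Markovianity $\mu^{L_F}_t(\R^d)\leq 1$, and the decay estimate \eqref{eq:3.6} follows from \eqref{eq:2.9} applied with a suitable sequence $\varphi_n\uparrow 1$ using $c\leq 0$ (a standard cutoff argument). \emph{Step 2.} Verify the integrability conditions defining $\mathcal S\mathcal P_\nu$ and the subclass \eqref{eq:3.7}: $\int_0^T\int|c|\,d\mu^{L_F}_s\,ds<\infty$, $b\in L^2_{loc}$ w.r.t. $\mu_t\,dt$, and $a^{ij}, b^i\in L^1([0,T]\times\R^d,\mu_t\,dt)$. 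Since $\sup_{t\leq T}\|T^{L_F}_t u\|_{L^\infty(m)}\leq \|u\|_\infty$, each of these reduces to integrability of $a^{ij}$, $b^i$, $c$ against $m = \rho\,dx$ up to an $L^\infty(m)$ factor — and this is exactly what the hypothesis \eqref{eq:4.3} delivers, since $L^1(\rho\,dx)+L^\infty(\rho\,dx)$ paired against an $L^\infty(\rho\,dx)$ density lands in $L^1(\rho\,dx)$, and $L^2_{loc}(\rho\,dx)$ gives the local $L^2$ bound on $b$. \emph{Step 3.} Apply Theorem~\ref{thm3.1} to conclude that the set in \eqref{eq:3.7} with $\nu = u\cdot m$ contains at most one element; combined with existence of $\mu^{L_F}_t$ in that set, this pins down the unique solution. \emph{Step 4.} Invoke Theorem~\ref{thm2.4}: uniqueness of the $m$-absolutely continuous solution to \eqref{eq:2.9} for every such $u$ gives $\#\MC_{sym,M}(L_0,D_0)\leq 1$, and since $L_F\in\MC_{sym,M}$ (Remark~\ref{Remark4.1}(ii)), we get $\MC_{sym,M} = \{L_F\}$.

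I expect the main obstacle to be Step~1--2, i.e., the bookkeeping to confirm that the semigroup solution $\mu^{L_F}_t$ genuinely satisfies \emph{all} the structural conditions \eqref{eq:3.3}--\eqref{eq:3.6} required for membership in $\mathcal S\mathcal P_\nu$ (not merely \eqref{eq:2.9}): in particular the subprobability mass decay inequality \eqref{eq:3.6}, which needs the cutoff argument exploiting $c\leq 0$, and the matching of the two notions of "absolutely continuous" (w.r.t.\ $m$ versus w.r.t.\ $dx$) — these coincide here only because $\rho>0$ $dx$-a.e.\ and $\rho\in L^1_{loc}$, so one must be slightly careful that local $dx$-densities of $\mu^{L_F}_t$ are bounded even though $\rho$ itself need not be. Everything else is a routine translation: assumption (A) is precisely what makes \eqref{eq:4.2} valid and $L_0$ symmetric on $L^2(\rho\,dx)$, and \eqref{eq:4.3} is tailored so that the coefficients meet the $L^1(\mu_t\,dt)$ requirement of \eqref{eq:3.7} after absorbing the uniformly bounded density $T^{L_F}_t u$.
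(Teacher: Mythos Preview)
Your approach is essentially the paper's, but there is one logical slip. You verify that $(\mu_t^{L_F})$ lies in the class \eqref{eq:3.7} and then invoke Theorem~\ref{thm2.4}. But Theorem~\ref{thm2.4} (even in its bounded-density refinement stated right after it) requires uniqueness among \emph{all} $m$-absolutely continuous solutions with bounded density, and uniqueness within \eqref{eq:3.7} settles this only if you know every such competitor also belongs to \eqref{eq:3.7}. The paper avoids this detour by taking an \emph{arbitrary} $L\in\MC_{sym,M}$ from the outset and checking directly that $(\mu_t^L)_{t\ge0}\in\mathcal S\mathcal P_\nu$ and satisfies the integrability in \eqref{eq:3.7}. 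Your Steps~1--2 go through verbatim for any such $L$, since they use only sub-Markovianity and $\sup_t\|T_t^L u\|_{L^\infty(m)}\le\|u\|_\infty$, nothing specific to $L_F$; once all $(\mu_t^L)$ lie in the at-most-singleton set \eqref{eq:3.7}, they coincide, hence so do all $T_t^L$ on a determining class of $u$, and thus all $L$. The appeal to Theorem~\ref{thm2.4} then becomes unnecessary.

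One minor correction in Step~1: the $dx$-density of $\mu_t^L$ is $(T_t^L u)\rho$, not $T_t^L u$, so it is \emph{not} bounded by $\|u\|_\infty$ on balls unless $\rho\in L^\infty_{loc}$. This is harmless, since neither $\mathcal S\mathcal P_\nu$ nor Theorem~\ref{thm3.1} requires absolute continuity with respect to $dx$; what you need (and have) is the $m$-density bound $T_t^L u\le\|u\|_\infty$, which is exactly what drives the integrability checks against \eqref{eq:4.3}. The paper's cutoff computation for \eqref{eq:3.6} uses the semigroup identity $T_t^L\chi_n=\chi_n+\int_0^t T_s^L L_0\chi_n\,ds$ and symmetry of $T_t^L$, yielding equality in \eqref{eq:3.6}; your proposed route via \eqref{eq:2.9} with $\varphi=\chi_n$ works just as well.
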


\begin{proof}
Let $L \in \MC_{sym, M}$ and $T_{t}^{L} := e^{tL}, t \geq 0$. Let $u \in L^{\infty}(\R^{d}, \rho \; dx), u \geq 0, \\
 \int u\; \rho \; dx = 1$ and $\mu_{t}^{L} := T_{t}^{L}u\ \rho \; dx, t \geq 0$. Then $(\mu_{t}^{L})_{t \geq 0} \in \mathcal S\mathcal P$ 
 for all $t \geq 0$ and $\mu_{0} = u \rho \; dx =: \nu$. Now let us check that $(\mu_{t}^{L})_{t \geq 0} \in \mathcal{S} \mathcal{P}_{\nu}$, 
 i.e., satisfies \eqref{eq:3.3} - \eqref{eq:3.6}.
We have seen in \eqref{eq:2.9} that $(\mu_{t}^{L})_{t \geq 0}$ solves the FPKE \eqref{eq:3.2}, hence \eqref{eq:3.3} holds. \\
From \eqref{eq:4.2} it follows that $L_0$ in this section is of type \eqref{eq:3.1} with
\begin{align}\label{eq:4.4}
b^{j} := \partial_{x_{i}}a^{ij} + 2 a^{ij} \rho^{-\frac{1}{2}} \partial_{x_{i}} \rho^{\frac{1}{2}} , 1 \leq j \leq d.
\end{align}
Since $T_{t}^{L}u \in (L^{1} \cap L^{\infty})(\rho \; dx)$, it follows from (A) and condition \eqref{eq:4.3} 
that also \eqref{eq:3.4}, \eqref{eq:3.5} holds, and additionally we have that
\begin{align}\label{eq:4.5}
a^{ij}, b^{j} \in L^{1}([0, T] \times \R^{d}; \mu_{t}^{L} dt) \; 1 \leq i, j \leq d.
\end{align}
So, it remains to check the second half of \eqref{eq:3.6}. To this end let $\chi_{n} \in C_{0}^{\infty}(\R^{d}), n \in \N$, 
such that $\mathbbm{1}_{B_{n}} \leq \chi_n \leq \mathbbm{1}_{B_{n + 1}}$ for all $n \in \N, \sup_{n \in \N} \|\chi^{'}_{n}\|_{\infty},\;  
\sup_{n \in \N} \|\chi^{''}_{n}\|_{\infty} < \infty$, and $\chi_{n}\nearrow$ in $n$, where $B_n$ denotes the ball in $\R^{d}$ 
with center $0$ and radius $n$. 

Then by \eqref{eq:4.3} for all $t \geq 0$
\begin{align}\label{eq:4.6}
\nu(\R^d) - \mu_{t}^{L}(\R^d) &= \int u \rho \; dx - \lim_{n \to \infty} 										 
\int \chi_n\ T_{t}^{L}u\ \rho \; dx \notag\\
							&= \lim_{n \to \infty} \int (1 - T_{t}^{L}\chi_n) u\ 										 
\rho \; dx) \notag\\
							&= \lim_{n \to \infty} \int (1 - \chi_n - 											 
\int_{0}^{t} T_{s}^{L} L_{0} \chi_n \; ds) 													
u\ \rho \; dx \notag\\
							&= - \lim_{n \to \infty} \int_{0}^{t} \int L_{0}\ \chi_n\ 	 
T_{s}^{L}\ u\ \rho\ dx\ ds \notag\\
							&= - \int_{0}^{t} \int c\ d\mu_{s}^{L}\ ds
\end{align}
and the second part of \eqref{eq:3.6} follows even with equality sign. Hence $(\mu_{t}^{L})_{t \geq 0} \in \mathcal{S \mathcal{P}_\nu}$. 
By \eqref{eq:4.5} it thus follows that $(\mu_{t}^{L})_{t \geq 0}$ also lies in the set defined in \eqref{eq:3.7}. 
Since $T_{t}^{L}, t \geq 0$, 
is uniquely determined by its values on all functions $u$ as above 
and $L \in \MC_{sym, M}$ was arbitrary, Theorem 3.1 implies that $$\# \MC_{sym, M} \leq 1.$$ 
Now the assertion follows by Remark 4.1(ii).
\end{proof}

\subsection{Nondegenerate locally Lipschitz diffusion coefficients}

Let $(L_0, D_0)$ be as in \eqref{eq:4.1} (respectively,
 \eqref{eq:4.2}) such that assumption (A) holds and let $\MC_{sym, M} := \MC_{sym, M}(L_0, D_0)$ be defined as in Section 1.
In the following result we shall assume (H2) for our $a^{ij}, 1 \leq i,j \leq d$, which is stronger than the local VMO-condition in Theorem 4.3.
As a reward we can relax the global conditions in \eqref{eq:4.3}. We need, however, 
to restrict to the case $c \equiv 0$.

\begin{Theorem}\label{thm4.4}
Suppose that $c \equiv 0$ and that conditions (A), (H1) and (H2) hold. 
Additionally, assume that for $1 \geq i, j \geq d$, and some $p > d + 2$
\begin{align}\label{eq:4.7}
\rho^{-\frac{1}{2}} \partial_{x_{i}} \rho^{\frac{1}{2}} \in L_{loc}^{p}(\R^{d}, dx),
\end{align}
and that
\begin{align}\label{eq:4.8}
\frac{\vert a^{ij} \vert}{(1 + \vert x \vert^2)} 
+ \frac{\vert \partial_{x_{i}}a^{ij} 
+ a^{ij}\rho^{-\frac{1}{2}}\partial_{x_{i}} \rho^{\frac{1}{2}} \vert}{(1 + \vert x \vert)} \in L^{1}(\R^d, \rho\ dx) + L^{\infty}(\R^{d}, \rho\ dx)
\end{align}
Then $$\MC_{sym, M} = \{L_{F} \},$$ i.e., Markov uniqueness holds for $(L_0, C_{0}^{\infty}(\R^d))$ on $L^{2}(\R^{d}, \rho\ dx)$.
\end{Theorem}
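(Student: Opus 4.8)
The plan is to run the same FPKE argument as in the proof of Theorem~\ref{thm4.3}, but this time reducing to Theorem~\ref{thm3.2} instead of Theorem~\ref{thm3.1}. First I would take an arbitrary $L \in \MC_{sym,M}$, set $T_t^L := e^{tL}$, and fix $u \in L^\infty(\R^d,\rho\,dx)$ with $u \geq 0$ and $\int u\,\rho\,dx = 1$; then $\mu_t^L := T_t^L u\,\rho\,dx$ defines a path of subprobability measures with $\mu_0 = u\rho\,dx =: \nu$, and by \eqref{eq:2.9} it solves the FPKE \eqref{eq:3.2}, so \eqref{eq:3.3} holds. Via \eqref{eq:4.2} the operator $L_0$ here is of the form \eqref{eq:3.1} with $c \equiv 0$ and drift $b^j := \partial_{x_i} a^{ij} + 2 a^{ij}\rho^{-1/2}\partial_{x_i}\rho^{1/2}$ (cf.\ \eqref{eq:4.4}). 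Since $c \equiv 0$, condition \eqref{eq:3.4} is trivial and \eqref{eq:3.6} reduces to $\mu_t(\R^d) \leq \nu(\R^d)$, which holds because $T_t^L$ is sub-Markovian; \eqref{eq:3.5} follows from assumption~(A) since $T_t^L u$ is in $L^1 \cap L^\infty(\rho\,dx)$ and the drift is locally $\rho\,dx$-square-integrable. Hence $(\mu_t^L)_{t\in[0,T]} \in \mathcal S\mathcal P_\nu$.

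Next I would verify the two hypotheses needed to invoke Theorem~\ref{thm3.2}. The integrability hypothesis $b \in L^p_{loc}([0,T]\times\R^d, dt\,dx;\R^d)$ with $p > d+2$ is where condition \eqref{eq:4.7} enters: the term $\partial_{x_i} a^{ij}$ lies in $L^p_{loc}(dx)$ because $a^{ij}$ is locally Lipschitz by (H2) (so its weak gradient is locally bounded), and the term $2a^{ij}\rho^{-1/2}\partial_{x_i}\rho^{1/2}$ lies in $L^p_{loc}(dx)$ since $a^{ij}$ is locally bounded (by (H1)) and $\rho^{-1/2}\partial_{x_i}\rho^{1/2} \in L^p_{loc}(dx)$ by \eqref{eq:4.7}; being time-independent, these are automatically in $L^p_{loc}$ on the time-space cylinder. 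Conditions (H1) and (H2) for $A$ are assumed directly. Finally, the growth/weighted-integrability condition in Theorem~\ref{thm3.2} requires the existence of \emph{some} solution in $\mathcal S\mathcal P_\nu$ with $|a^{ij}|/(1+|x|^2) + |b^i|/(1+|x|) \in L^1([0,T]\times\R^d,\mu_t\,dt)$; I would supply precisely the solution $(\mu_t^L)_{t}$ just constructed, and check this integrability using \eqref{eq:4.8} together with the fact that the density $T_t^L u$ of $\mu_t^L$ with respect to $\rho\,dx$ is uniformly bounded in $t$ — so the $L^1(\R^d,\rho\,dx) + L^\infty(\R^d,\rho\,dx)$ membership in \eqref{eq:4.8} upgrades to $L^1([0,T]\times\R^d,\mu_t^L\,dt)$ membership.

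With all hypotheses of Theorem~\ref{thm3.2} verified, that theorem gives that $\mathcal S\mathcal P_\nu$ consists of exactly one element; in particular $(\mu_t^L)_t$ is uniquely determined by $\nu$, independently of which $L \in \MC_{sym,M}$ we started from. Since every $T_t^L$ is determined by its action on the nonnegative $L^\infty$ densities $u$ normalized as above (these span a dense subset of $L^2$, as noted after Proposition~\ref{prop2.2}), it follows that $\# \MC_{sym,M} \leq 1$. Because the Friedrichs extension $L_F$ lies in $\MC_{sym,M}$ by Remark~\ref{Remark4.1}(ii) (its semigroup being sub-Markovian), we conclude $\MC_{sym,M} = \{L_F\}$, which is the asserted Markov uniqueness for $(L_0, C_0^\infty(\R^d))$ on $L^2(\R^d,\rho\,dx)$.

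The main obstacle I anticipate is the bookkeeping in the last verification: one must be careful that the \emph{same} path $(\mu_t^L)_t$ simultaneously witnesses the existence hypothesis of Theorem~\ref{thm3.2} and is the object whose uniqueness we then exploit — i.e.\ that the weighted-integrability condition genuinely follows from the stated global condition \eqref{eq:4.8} after passing from the reference measure $\rho\,dx$ to $\mu_t^L\,dt$, which relies essentially on the uniform-in-$t$ bound $\sup_{t\in[0,T]}\|T_t^L u\|_\infty \leq \|u\|_\infty$ coming from sub-Markovianity. The verification of \eqref{eq:3.5} and the $L^p_{loc}$-integrability of the drift from \eqref{eq:4.7} and (H2) is routine by comparison. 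Note that, unlike in Theorem~\ref{thm4.3}, the restriction $c \equiv 0$ is forced here precisely because Theorem~\ref{thm3.2} is only available in that case.
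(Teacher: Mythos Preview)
Your argument is correct and essentially identical to the paper's own proof: verify $(\mu_t^L)\in\mathcal S\mathcal P_\nu$ using sub-Markovianity for \eqref{eq:3.6}, check $b\in L^p_{loc}$ via (H2) and \eqref{eq:4.7}, use \eqref{eq:4.8} together with $\sup_t\|T_t^Lu\|_\infty<\infty$ for the weighted integrability, and then invoke Theorem~\ref{thm3.2} and Remark~\ref{Remark4.1}(ii). One minor correction to your closing remark: Theorem~\ref{thm3.2} itself does allow $c\le 0$ (with $c\in L^{p/2}_{loc}$); the hypothesis $c\equiv 0$ in Theorem~\ref{thm4.4} is imposed so that \eqref{eq:3.4} and \eqref{eq:3.6} can be verified without an additional global assumption on $c$ as in \eqref{eq:4.3}.
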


\begin{proof}
Define $b = (b^j)_{1 \leq i \leq d}$ as in \eqref{eq:4.4}. We note that by (H2) we have $\partial_{x_{i}}a^{ij} \in L_{loc}^{\infty}(\R^d, dx)$ 
for $1 \leq i, j \leq d$. Let $L \in \MC_{sym, M}$ and let $(\mu_{t}^{L})_{t \geq 0}, \nu, \chi_n, n \in \N$, 
be as defined in the proof of Theorem 4.3. Then for every $t \geq 0$, since $T_{t}^{L}$ is sub-Markovian, we have 
\begin{align*}
\mu_{t}^{L}(\R^d) &= \int T_{t}^{L}u\ \rho\ dx \\
				&= \lim_{n \to \infty} \int \chi_n\ T_{t}^{L}u\ \rho\ dx \\
				&= \lim_{n \to \infty} \int T_{t}^{L}\ \hspace{-0.1cm}\chi_n\ u\ \rho\ dx\\
				&\leq \int u\ \rho\ dx = \nu (\R^d).
\end{align*}
Hence \eqref{eq:3.6} holds and then exactly as in the proof of Theorem 4.3 on checks (without using (4.8)) 
that by assumption (A) also \eqref{eq:3.3}--\eqref{eq:3.5} hold 
to conclude that $(\mu_{t}^{L})_{t \geq 0} \in \mathcal{S \mathcal{P}_{\nu}}$. 
Furthermore, since $T_{t}^{L}u \in (L^1 \cap L^\infty)(\R^d, \rho \; dx)$, 
the left-hand side of (4.8) is also an element of $L^1 ([0, T] \times \R^d, \mu_{t}^{L} dt)$, 
hence by \eqref{eq:4.7} all assumptions of Theorem 3.2 are fulfilled. So, $\# \MC_{sym, M} \leq 1$, and Remark 4.1(ii) implies the assertion.
\end{proof}

\begin{Remark}
\yx
\rm
Let us mention the uniqueness problem studied \cite{Hille}  for the one-dimensional
Fokker-Planck-Kolmogorov equation. For simplicity we consider the case
of the unit diffusion coefficient (note that in \cite{Hille} the opposite notation is used, the drift
is denoted by $a$, but we follow our notation).
The problem posed  in \cite[\S8, p.~116]{Hille}   (in the case of the equation on the whole real line)
is this: to find necessary and sufficient conditions in order that
for every function $h\in L^1(\mathbb{R})$ with $Lh=h''-(bh)'\in L^1(\mathbb{R})$
there is a unique solution  $T(x,t,h)$ of the equation  $\partial_t u=\partial_x^2 u-\partial_x(ub)$
with initial condition $h$ in the sense of the relation $\|T(\cdot,t,h)-h\|_{L^1}\to 0$ as $t\to 0$.
 This setting is called Problem~$L_0$, and in Problem~$L$ it is required  in addition that the
 solutions with probability initial densities from the domain of definition
 of the operator $L$ must be  probabilistic. 
 According to \cite[Theorems~8.5 and~8.7]{Hille}, where the drift coefficient  is assumed
 to be continuous,
 a necessary and sufficient condition for the solvability of Problem $L_0$ is the divergence of the  integral
 $$
 \int_0^x \exp B(y)\int_0^y \exp (-B(u))\, du \, dy, \quad \hbox{where} \quad B(y)=\int_0^y b(s)\, ds
 $$
at $-\infty$ and $+\infty$, and  for the solvability of Problem $L$
the divergence of the integral
$$
\int_0^x \exp (-B(y))\int_0^y \exp (B(u))\, du \, dy
$$
at $-\infty$ and $+\infty$ is additionally required. This is the previous  condition for the drift $-b$,
which makes the conditions for $b$ and $-b$ the same.
In both cited theorems of Hille the closure of the operator $L$ generates a semigroup on $L^1(\mathbb{R})$.
It is proved in \cite{BKS21} that a probability solution is always unique in the one-dimensional case 
(under the stated assumptions about $a$ and~$b$). 
However, an example constructed in \cite{BKS21} shows that the situation is possible where for an
initial condition that is a probability measure there exists a unique probability solution of the Cauchy problem,
but there are also other solutions. 
It is worth noting that it is asserted in Remark 4.6 in \cite{BKS21} that if Hille's condition is violated, 
then for some initial condition there is no solution, but this does follow from the results in \cite{BKS21}, because 
they ensure uniqueness only for probability solutions, so that one cannot rule out the possibility that 
existence holds for all initial solutions, but uniqueness fails in the class of signed solutions. 
\xy
\end{Remark}

\subsection{Nondegenerate diffusion coefficients and Lyapunov function conditions}

Let $(L_0, D_0)$ be as in \eqref{eq:4.1} (respectively, \eqref{eq:4.2}) 
and assume that assumption (A) holds. Let $\MC_{sym, M} := \MC_{sym, M}(L_0, D_0)$ be as defined in Section 1.

\begin{Theorem}\label{thm4.5}
Suppose that $c \equiv 0$ and that conditions (A), (H1) and (H2) hold. Additionally, assume 
that \eqref{eq:4.7} holds and that \eqref{eq: 3.8} holds with $b = (b^{j})_{1 \leq j \leq d}$ defined as in \eqref{eq:4.4}. 
Then $$\MC_{sym, M} = \{ L_F \},$$ i.e., Markov uniqueness holds for $(L_0, C_{0}^{\infty}(\R^d))$ on $L^{2}(\R^{d}, \rho\ dx)$.
\end{Theorem}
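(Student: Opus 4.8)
The plan is to mimic closely the proof of Theorem \ref{thm4.4}, replacing the application of Theorem \ref{thm3.2} by an application of the Lyapunov function result Theorem \ref{thm3.3}. First I would fix $L \in \MC_{sym, M}$, set $T_t^L := e^{tL}$, pick $u \in L^\infty(\R^d, \rho\ dx)$ with $u \geq 0$ and $\int u\ \rho\ dx = 1$, and form the subprobability measures $\mu_t^L := T_t^L u\ \rho\ dx$. As in the proof of Theorem \ref{thm4.3}, the identity \eqref{eq:2.9} shows $(\mu_t^L)_{t \geq 0}$ solves the FPKE \eqref{eq:3.2} with $L_0$ written in the form \eqref{eq:3.1} via \eqref{eq:4.2}, so that the drift is $b = (b^j)_{1 \leq j \leq d}$ as in \eqref{eq:4.4} and $c \equiv 0$. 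Then, exactly as in Theorem \ref{thm4.4}, since $T_t^L$ is sub-Markovian one gets $\mu_t^L(\R^d) \leq \nu(\R^d)$ using the cutoff functions $\chi_n$, so \eqref{eq:3.6} holds; and assumption (A) together with $T_t^L u \in (L^1 \cap L^\infty)(\R^d, \rho\ dx)$ gives \eqref{eq:3.3}--\eqref{eq:3.5}, i.e. $(\mu_t^L)_{t \geq 0} \in \mathcal S\mathcal P_\nu$.

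Next I would verify the remaining hypotheses of Theorem \ref{thm3.3}. Conditions (H1) and (H2) are assumed directly; $c \equiv 0$ certainly satisfies $c \leq 0$ and $c \in L^{p/2}_{loc}$. For the drift, I need $b \in L^p_{loc}([0,T] \times \R^d, dt\ dx; \R^d)$ for some $p > d+2$: here $b^j = \partial_{x_i} a^{ij} + 2 a^{ij} \rho^{-1/2} \partial_{x_i} \rho^{1/2}$, and by (H2) we have $\partial_{x_i} a^{ij} \in L^\infty_{loc}(\R^d, dx)$ while $a^{ij} \in L^\infty_{loc}$ again by (H2) (or (H1)); combined with the assumed $\rho^{-1/2}\partial_{x_i}\rho^{1/2} \in L^p_{loc}(\R^d, dx)$ from \eqref{eq:4.7}, this yields $b \in L^p_{loc}(\R^d, dx; \R^d)$, hence in $L^p_{loc}([0,T]\times\R^d, dt\ dx; \R^d)$ since the coefficients are time-independent. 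Finally, the Lyapunov condition $L_0 V(t,x) \leq C + C V(x)$ with $V(x) = \ln(|x|^2 + 1)$ for $|x| > 1$ is, by Example \ref{example3.4}, exactly the inequality \eqref{eq: 3.8} (with $c \equiv 0$, so the $c$-term drops), which is assumed. Thus all hypotheses of Theorem \ref{thm3.3} are met, so $\mathcal S\mathcal P_\nu$ contains at most one element.

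Since $(\mu_t^L)_{t \geq 0} \in \mathcal S\mathcal P_\nu$ for every admissible $u$, and two elements $L, \tilde L \in \MC_{sym, M}$ would produce two such solutions with the same initial datum $\nu = u\ \rho\ dx$, uniqueness in $\mathcal S\mathcal P_\nu$ forces $T_t^L u = T_t^{\tilde L} u$ in $L^1(\rho\ dx)$ for $dt$-a.e. $t$, hence for all $t$ by strong continuity, for all $u \in L^\infty$ with $u \geq 0$, $\int u\ \rho\ dx = 1$. As these $u$ determine $T_t^L$, we get $T_t^L = T_t^{\tilde L}$ for all $t \geq 0$, so $\# \MC_{sym, M} \leq 1$. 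Since by Remark \ref{Remark4.1}(ii) the Friedrichs extension $L_F$ lies in $\MC_{sym, M}$, we conclude $\MC_{sym, M} = \{L_F\}$, which is Markov uniqueness for $(L_0, C_0^\infty(\R^d))$ on $L^2(\R^d, \rho\ dx)$.

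The main obstacle — and it is really the only non-routine point — is to check carefully that the one needed global integrability condition, namely that the Lyapunov inequality \eqref{eq: 3.8} holds for the specific drift $b$ coming from $\rho$ via \eqref{eq:4.4}; here one must be slightly careful that the term $2\langle b, x\rangle$ in \eqref{eq: 3.8} involves the logarithmic-derivative term $a^{ij}\rho^{-1/2}\partial_{x_i}\rho^{1/2}$, so the hypothesis is genuinely a joint growth condition on $A$ and $\rho$. Everything else is a verbatim repetition of the bookkeeping already carried out in the proofs of Theorems \ref{thm4.3} and \ref{thm4.4}.
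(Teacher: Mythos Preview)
Your proposal is correct and follows exactly the approach the paper takes: the paper's own proof consists of a single sentence stating that one repeats the proof of Theorem~\ref{thm4.4} verbatim, replacing the appeal to Theorem~\ref{thm3.2} by Theorem~\ref{thm3.3} together with Example~\ref{example3.4}, and replacing condition~\eqref{eq:4.8} by~\eqref{eq: 3.8}. Your write-up simply unpacks that sentence in detail, and all the verifications you list (in particular $b\in L^p_{loc}$ via (H2) and~\eqref{eq:4.7}) are exactly the ones needed.
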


\begin{proof}
The proof is completely analogous to the proof of Theorem 4.4 except for applying Theorem 3.3 and Example 3.4 
instead of Theorem 3.2 and replacing condition (4.8) by \eqref{eq: 3.8}.
\end{proof}

\begin{Remark}\label{Remark4.6}
\rm
We would like to point out that Theorem 4.6 is close to Corollary 2.3 in \cite{St1999a} and to Proposition 2.9.4 
in \cite{BRSt2000}. However, it is not covered by them, since $\rho$ is not a probability density. 
The function $\rho$ is not even assumed to be in $L^1(\R^d, dx)$ here.
\end{Remark}

\subsection{Degenerate diffusion coefficients}

\subsubsection{Markov uniqueness as a consequence of the results of Le Bris and Lions}

\begin{Theorem}\label{thm4.7}
Let $\sigma := (\sigma^{ij})_{1 \leq i, j \leq d}, A := \sigma \sigma^{*}$ and $a^{ij} = (\sigma \sigma^{*})^{ij}, 1 \leq i, j \leq d$, where
\begin{align}\label{eq:4.9}
\sigma^{ij} \in W_{loc}^{1, 2}(\R^d, dx), \partial_{x_{i}}a^{ij} \in W_{loc}^{1, 2}(\R^d, dx)
\end{align}
and
\begin{align}\label{4.10}
\sigma^{ij}, \partial_{x_{j}} \partial_{x_{i}}a^{ij} \in L^\infty(\R^d, dx), 
\frac{\partial_{x_{i}}a^{ij}}{1 + \vert x \vert} \in L^1(\R^d, dx) + L^\infty(\R^d, dx).
\end{align}
Then condition (A) holds for $\rho \equiv 1, c \equiv 0$, and the corresponding operator $(L_0. D_0)$ 
from \eqref{eq:4.2} is symmetric on $L^2(\R^d, dx).$ Let $\MC_{sym, M} := \MC_{sym, M}(L_0, D_0)$ 
be as defined in Section 1. Then 
$$\MC_{sym, M} = \{L_F \},$$ 
i.e., Markov uniqueness holds for $(L_0, C_{0}^{\infty}(\R^d))$ on $L^2(\R^d, dx)$.
\end{Theorem}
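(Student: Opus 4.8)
The plan is to reduce Theorem~\ref{thm4.7} to Theorem~\ref{thm3.5} (the Le~Bris/Lions result) in exactly the same spirit as the proofs of Theorems~\ref{thm4.3}--\ref{thm4.5}, but now working in the class of absolutely continuous solutions with $\sigma^*\nabla\rho\in L^2$ rather than in $\mathcal S\mathcal P_\nu$. First I would verify the hypotheses: from \eqref{eq:4.9}--\eqref{4.10} check that $a^{ij}=(\sigma\sigma^*)^{ij}\in W^{1,1}_{loc}(\R^d,dx)$ and $\partial_{x_i}a^{ij}\in L^2_{loc}(\R^d,dx)$, and that $\rho\equiv1$, $c\equiv0$ make assumption~(A) trivially hold (the conditions involving $\rho^{1/2}$ are vacuous since $\rho\equiv1$). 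Hence Remark~\ref{Remark4.1}(ii) applies: $L_0$ is symmetric on $L^2(\R^d,dx)$, its Friedrichs extension $L_F$ lies in $\MC_{sym,M}$, so $\MC_{sym,M}\neq\emptyset$, and it remains to prove $\#\MC_{sym,M}\le1$.

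Next, given an arbitrary $L\in\MC_{sym,M}$ with semigroup $T_t^L=e^{tL}$, fix a probability density $u\in L^1\cap L^\infty(\R^d,dx)$ and set $\mu_t^L:=T_t^Lu\,dx$. As established in \eqref{eq:2.9}, $(\mu_t^L)_{t\ge0}$ solves the FPKE \eqref{eq:3.2} for $L_0$ written in the form \eqref{eq:4.2}, i.e.\ with $c\equiv0$ and drift $b^j=\partial_{x_i}a^{ij}$ (the $\rho^{-1/2}\partial_{x_i}\rho^{1/2}$ term vanishes), so that $\beta^i=b^i-\partial_{x_j}a^{ij}=0$; thus all the $\beta$-conditions in Theorem~\ref{thm3.5} are automatic. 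Since $\sigma^{ij}\in L^\infty$ by \eqref{4.10}, the growth condition on $\sigma^{ij}/(1+|x|)$ holds. The density $z_t:=T_t^Lu$ satisfies $z_t\in L^1\cap L^\infty$ with $\sup_{t\in[0,T]}\|z_t\|_\infty<\infty$ since $T_t^L$ is sub-Markovian; the remaining point is $\sigma^*\nabla\rho\in L^2([0,T];L^2)$ in the solution class, which I would obtain from the symmetric Dirichlet-form structure: the form $\mathcal E_F$ controls $\int_0^T\int\langle A\nabla z_t,\nabla z_t\rangle\,dx\,dt=\int_0^T\int|\sigma^*\nabla z_t|^2\,dx\,dt$, and this is finite because $t\mapsto T_t^Lu$ lies in $L^2([0,T];D(\mathcal E_F))$ for $u\in L^2$ (standard analytic-semigroup/spectral estimate, using upper boundedness of $L$). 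Hence $(\mu_t^L)_{t\ge0}$ lies in the uniqueness class of Theorem~\ref{thm3.5} with $\mu_0=u\,dx$, so it is the unique such solution; since $T_t^L$ is determined by its values on such $u$ and $L$ was arbitrary, $\#\MC_{sym,M}\le1$, and Remark~\ref{Remark4.1}(ii) gives $\MC_{sym,M}=\{L_F\}$.

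The main obstacle I anticipate is the verification that the Fokker-Planck solution $\mu_t^L=T_t^Lu\,dx$ produced by a general element $L\in\MC_{sym,M}$ actually lands in the specific regularity class of Theorem~\ref{thm3.5} — in particular the square-integrability of $\sigma^*\nabla\rho$ in space-time. For the Friedrichs extension this is immediate from the form domain, but for an arbitrary self-adjoint sub-Markovian extension $L$ one must argue that $T_t^Lu$ still enjoys enough Sobolev regularity; here one uses that any such $L$ is upper bounded and self-adjoint, so $t\mapsto T_t^Lu$ is, for $t>0$, in $D(L)\subset D((-L)^{1/2})$, and the quadratic form $u\mapsto(-Lu,u)_2$ dominates (a multiple of) $\int\langle A\nabla u,\nabla u\rangle\,dx$ on $C_0^\infty$ and, by closability, on the relevant domain — this comparison of forms is the technical heart and should be spelled out carefully. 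A secondary nuisance is checking the precise local Sobolev hypotheses $\sigma^{ij}\in L^2([0,T];W^{1,2}_{loc})$ and $\partial_{x_i}a^{ij}\in W^{1,1}_{loc}$ follow from \eqref{eq:4.9}--\eqref{4.10} (the latter uses $\partial_{x_j}\partial_{x_i}a^{ij}\in L^\infty$), but this is routine once the structural reduction $\beta\equiv0$ is in place.
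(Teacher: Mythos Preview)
Your proposal is correct and follows essentially the same route as the paper: show that for arbitrary $L\in\MC_{sym,M}$ the path $(T_t^Lu\,dx)_{t\ge0}$ lies in the uniqueness class of Theorem~\ref{thm3.5} (Le~Bris/Lions), then conclude via Remark~\ref{Remark4.1}(ii). The paper's argument is in fact terser than yours---it simply asserts $\nabla T_t^Lu\in L^2(\R^d,dx)$ and uses $\sigma^{ij}\in L^\infty$ to get $\sigma^*\nabla T_t^Lu\in L^2$, without spelling out the form-comparison you rightly flag as the delicate point; your observation that $\beta\equiv0$ here (so that the $\beta$-hypotheses of Theorem~\ref{thm3.5} are vacuous) is a clean simplification that the paper does not make explicit, and in fact suggests that some of the conditions in \eqref{4.10} are stronger than needed.
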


\begin{proof}
Let $L \in \MC_{sym, M}$ and $\mu_{t}^{L} := T_{t}^{L}u \; dx, t \geq 0$, with $u$ as in the proof of Theorem 4.3. 
Then by assumptions (4.9), (4.10), we can apply Theorem 3.5 with $\rho_0 := u$, 
since $T_{t}^{L}u \in (L^1 \cap L^\infty)(\R^d, dx)$ and $\sigma^{*} \nabla T_{t}^{L}u \in L^2(\R^d, dx;\ \R^d)$, 
because $\nabla T_{t}^{L}u \in L^2(\R^d, dx;\ \R^d)$ and $\sigma^{ij} \in L^\infty (\R^d, dx), 1 \leq i, j\leq d$. 
Hence $\# \MC_{sym, M} \leq 1$ and by Remark 4.1(ii) the assertion follows.
\end{proof}

\subsubsection{Markov uniqueness in another degenerate case}

Let $\rho \in (L^1 \cap L^3)(\R^d, dx)$ such that 
$$\rho > 0, \ \int \rho dx = 1, \ \rho^{\frac{1}{2}} \in W_{loc}^{1, 1}(\R^d, dx)
$$
 and 
$\nabla \rho^{\frac{1}{2}} \in L^\infty(\R^d, dx;\ \R^d)$, and assume that (H3) holds. Consider the operator
\begin{align}
L_0 \varphi := \rho\; div (A \nabla \varphi) + \sqrt{\rho} \langle A \nabla \sqrt{\rho}, 
\nabla \varphi\rangle_{\R^d}, \varphi \in D_0 := C_{0}^{\infty}(\R^d),
\end{align}
and its corresponding FPKE (3.2). Note that by our assumptions on $A$ and $\rho$ we have that 
$L_0\colon D_0 \subset L^2(\R^d, \rho \; dx) \to L^2(\R^d, \rho \; dx)$ and 
$L_0 \varphi = \frac{1}{\rho} {\rm div}(\rho^2 A \nabla \varphi)$ for all $\varphi \in D_0 = C_{0}^{\infty}(\R^d)$, 
hence $(L_0, D_0)$ is symmetric on $L^2(\R^d, \rho \; dx)$.
Let $\MC_{sym, M} := \MC_{sym, M}(L_0, D_0)$ be as defined in Section 1.

\begin{Theorem}\label{thm4.8}
Assume that (H3) holds and let $\rho$ satisfy the assumptions specified above.
Then $$\MC_{sym, M} = \{L_F \},$$ i.e., Markov uniqueness holds for $(L_0, C_{0}^{\infty}(\R^d))$ on $L^{2}(\R^{d}, \rho\ dx)$.
\end{Theorem}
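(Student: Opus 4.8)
The plan is to follow the same pattern as the proofs of Theorems~\ref{thm4.3}, \ref{thm4.4} and \ref{thm4.7}. Given $L\in\MC_{sym,M}$ with semigroup $T_t^L:=e^{tL}$, fix $u\in L^\infty(\R^d,\rho\,dx)$ with $u\ge0$ and $\int u\,\rho\,dx=1$, set $\nu:=u\,\rho\,dx$ and $\mu_t^L:=T_t^Lu\,\rho\,dx$, and show that $(\mu_t^L)_{t\in[0,T]}$ lies in the class $\mathcal Z_\nu$ built from the operator of the theorem (which is \eqref{eq:3.8} with $\tilde\rho=\rho$ and $b:=A\nabla\sqrt\rho$), so that Theorem~\ref{thm3.5b} applies; reading off $\#\MC_{sym,M}\le1$ and combining with $L_F\in\MC_{sym,M}$ — which holds by Remark~\ref{Remark4.1}(ii), since assumption (A) is satisfied for $L_0$ written in the form \eqref{eq:4.1} with second-order coefficient $\rho\,a^{ij}$ and vanishing zero-order coefficient — then gives $\MC_{sym,M}=\{L_F\}$. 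A convenient preliminary observation is that $\rho\in L^\infty(\R^d,dx)$: since $\rho^{1/2}\in W^{1,1}_{loc}$ and $\nabla\rho^{1/2}\in L^\infty$, the function $\rho^{1/2}$ has a globally Lipschitz representative, and, being also in $L^2(\R^d)$ (because $\int\rho\,dx=1$), it must be bounded. Hence $\rho^k\in L^1(\R^d,dx)$ for every $k\ge1$, and, using that (H3) makes $A$ bounded (indeed $\gamma I\le A\le MI$) and Lipschitz, all coefficients of $L_0$ — in particular $b=A\nabla\sqrt\rho$, and the coefficients $\rho a^{ij}$, $\partial_{x_i}(\rho a^{ij})$ appearing in \eqref{eq:4.1} — are bounded, so the bookkeeping for (A) is routine.

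With $\rho$ bounded, most of the conditions \eqref{eq:3.9}--\eqref{eq:3.12} defining $\mathcal Z_\nu$ are immediate. Each $\mu_t^L$ is a subprobability measure on $\R^d$ with $\mu_0=\nu$ and with density $z(t,x)=(T_t^Lu)(x)\,\rho(x)$ with respect to $dx$; \eqref{eq:3.9} follows from \eqref{eq:2.9}, the (bounded) coefficients of $L_0$ lying in $L^1_{loc}(\mu_t\,dt)$; \eqref{eq:3.11} holds because $\rho\,z=\rho^2\,T_t^Lu$ is bounded; and for \eqref{eq:3.12}, using $z=\rho\,T_t^Lu\le\|u\|_\infty\,\rho$ and $(1+|x|)^{-1},(1+|x|^2)^{-1}\le1$, the integrand is dominated by a constant times $\rho^{3/2}+\rho^2+\rho^4$, each summand in $L^1(\R^d,dx)$, so the integral over $\{N\le|x|\le2N\}$ tends to $0$. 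Also (H4) is the boundedness of $b=A\nabla\sqrt\rho$ just noted, and (H3) is assumed. The one genuinely nontrivial requirement is \eqref{eq:3.10}, i.e. $\mu_t^L(\R^d)=1$ for $dt$-a.e.\ $t$ — not merely $\mu_t^L(\R^d)\le1$.

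This conservativeness \eqref{eq:3.10} is the heart of the matter, and is where the global integrability of $\rho$ is used. I would argue as in the proof of Theorem~\ref{thm4.3}: pick $\chi_n\in C_0^\infty(\R^d)$ with $\mathbbm{1}_{B_n}\le\chi_n\le\mathbbm{1}_{B_{n+1}}$, $\chi_n\nearrow1$ and $\sup_n(\|\chi_n'\|_\infty+\|\chi_n''\|_\infty)<\infty$. Since $\chi_n\in D_0\subset D(L)$ and $L\chi_n=L_0\chi_n$, Duhamel's formula gives $T_t^L\chi_n=\chi_n+\int_0^tT_s^LL_0\chi_n\,ds$; pairing with $u$ in $L^2(\R^d,\rho\,dx)$ and using that the $T_s^L$ are self-adjoint yields
\[
\int\chi_n\,T_t^Lu\,\rho\,dx=\int\chi_n\,u\,\rho\,dx+\int_0^t\int L_0\chi_n\,d\mu_s^L\,ds .
\]
As $n\to\infty$, the left-hand side converges to $\mu_t^L(\R^d)$ and the first term on the right to $1$ (monotone convergence, using $0\le T_t^Lu\le\|u\|_\infty$ and that $\rho\,dx$ is finite). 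For the last term, $L_0\chi_n\cdot\rho=\mathrm{div}(\rho^2A\nabla\chi_n)$ is supported in $\{n\le|x|\le n+1\}$, where $|\mathrm{div}(\rho^2A\nabla\chi_n)|\le C\,(|\nabla(\rho^2)|+\rho^2)$ with $C$ independent of $n$, and $\nabla(\rho^2)=4\rho^{3/2}\nabla\rho^{1/2}\in L^1(\R^d)$ (since $\rho^{3/2}\in L^1$ and $\nabla\rho^{1/2}\in L^\infty$) while $\rho^2\in L^1$; hence
\[
\Bigl|\int L_0\chi_n\,d\mu_s^L\Bigr|=\Bigl|\int\mathrm{div}(\rho^2A\nabla\chi_n)\,T_s^Lu\,dx\Bigr|\le\|u\|_\infty\,C\!\int_{n\le|x|\le n+1}\!\bigl(|\nabla(\rho^2)|+\rho^2\bigr)\,dx\longrightarrow0
\]
uniformly in $s\in[0,t]$ (tails of $L^1$ functions, and $\|T_s^Lu\|_\infty\le\|u\|_\infty$). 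By dominated convergence in $s$ the last term vanishes in the limit, so $\mu_t^L(\R^d)=1$. I expect this step — together with keeping careful track of exactly which integrability hypotheses on $\rho$ and on $A$ are used at each turn — to be the only real obstacle; the rest is bookkeeping.

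It remains to conclude. For any $L,\tilde L\in\MC_{sym,M}$ and any admissible $u$, both $(\mu_t^L)_{t\in[0,T]}$ and $(\mu_t^{\tilde L})_{t\in[0,T]}$ belong to $\mathcal Z_\nu$, so Theorem~\ref{thm3.5b} gives $\mu_t^L=\mu_t^{\tilde L}$ for $dt$-a.e.\ $t\in[0,T]$, i.e. $T_t^Lu=T_t^{\tilde L}u$ in $L^1(\R^d,\rho\,dx)$ for a.e.\ $t\in[0,T]$; by strong continuity this holds for all $t\in[0,T]$, and since $T\in(0,\infty)$ was arbitrary, for all $t\ge0$. Since each $T_t^L$ is determined by its values on such $u$ (which, after rescaling and forming differences, span a dense subspace of $L^2(\R^d,\rho\,dx)$), we get $T_t^L=T_t^{\tilde L}$ for all $t\ge0$, hence $L=\tilde L$. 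Thus $\#\MC_{sym,M}\le1$, and since (A) holds, Remark~\ref{Remark4.1}(ii) gives $L_F\in\MC_{sym,M}$; therefore $\MC_{sym,M}=\{L_F\}$.
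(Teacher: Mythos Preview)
Your proof is correct and follows the same overall structure as the paper's: verify that $(\mu_t^L)_{t\in[0,T]}$ lies in the class $\mathcal Z_\nu$ (i.e., conditions \eqref{eq:3.9}--\eqref{eq:3.12} with $\tilde\rho=\rho$), apply Theorem~\ref{thm3.5b}, and conclude via Remark~\ref{Remark4.1}(ii). The one substantive difference is your preliminary observation that $\rho\in L^\infty(\R^d,dx)$, deduced from $\rho^{1/2}$ being globally Lipschitz and in $L^2$. The paper does not use this; it only records that $\rho\in L^\infty_{loc}$ and that $\rho^{1/2}$ has at most linear (hence $\rho$ at most quadratic) growth, and then invokes the hypotheses $\rho\in L^2$ and $\rho\in L^3$ separately to control the two summands in the limit condition \eqref{eq:3.12}. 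Your boundedness observation streamlines these estimates (every power $\rho^k$, $k\ge1$, lies in $L^1$ once $\rho$ is bounded and in $L^1$) and incidentally shows that the hypothesis $\rho\in L^3$ is redundant. Your conservativeness argument for \eqref{eq:3.10}, written via the divergence form $L_0\chi_n\cdot\rho=\mathrm{div}(\rho^2A\nabla\chi_n)$, is a minor variant of the paper's, which instead bounds $|L_0\chi_n|\le C(\rho+1)$ in non-divergence form; both reach the same conclusion by dominated convergence against an $L^1(\rho\,dx)$ majorant.
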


\begin{proof}
Let $L \in \MC_{sym, M}$ and $\mu_{t}^{L} := T_{t}^{L}u\ \rho \; dx, t \geq 0$, with $u$ as in the proof 
of Theorem 4.3. We have seen in \eqref{eq:2.9} that $(\mu_{t}^{L})_{t \geq 0}$ solves the FPKE associated 
with $(L_0, D_0)$ in (4.11). To show that it is the only such solution we are going to apply Theorem 3.6. So, 
let us check its assumptions for $z(t, \cdot) := T_{t}^{L}u\ \rho$ and $\tilde{\rho} := \rho$. 
First of all, (3.11) holds as just seen. So, let us show (3.12). 
As in \eqref{eq:4.6} we have for every $t \geq 0$
$$\mu_{t}^{L}(\R^d) = \int u \rho \; dx + \lim_{n \to \infty} \int_{0}^{t} \int L_0 \chi_n\ T_{s}^{L}u\ \rho \; dxds.$$
By our assumptions about $A$ and since $\nabla \sqrt{\rho} \in L^\infty (\R^d, dx; \; \R^{d})$, 
we have that for some $C \in (0, \infty)$ and all $s \geq 0$
\begin{align*}
\sup_{n} \vert L_0 \chi_n\ T_{s}^{L}u \vert \leq C \Vert u \Vert_{\infty} (\rho + 1), \quad dx-\hbox{a.e.}
\end{align*}
Since $\rho \in (L^1 \cap L^2)(\R^d, dx)$ and $L_0\ \hspace{-0.1cm}\chi_n \to 0\ dx$-a.e. as $n \to \infty$, we conclude 
that $$\mu_{t}^{L}(\R^d) = \int u \rho \; dx = 1 \text{\; for all } t \geq 0.$$ 
Next, (3.13) is clear, since $T_{t}^{L}u \in L^\infty (\R^d, dx)$ and $\rho \in L^{\infty}_{loc}(\R^d, dx)$, 
because $\nabla \rho^{{\frac{1}{2}}} \in L^{\infty}(\R^d, dx)$. 

Finally, let us show (3.14). 
It suffices to show that all functions under the integral in (3.14) are in $L^1 (\R^d, dx)$ in our case, 
due to our assumptions. For the first summand this is immediate, since
\begin{align*}
(\rho^{\frac{1}{2}} + \rho)\ z (t, \cdot) &= (\rho^{\frac{1}{2}} + \rho)\rho \; T_{t}^{L}u \\
		&\leq (1 + 2\rho) \rho \Vert u \Vert_\infty \in L^1(\R^d, 		dx),
\end{align*}
since $\rho \in (L^1 \cap L^2)(\R^d, dx)$ by assumption. For the second summand we note that $\rho^{\frac{1}{2}}$ has a Lipschitz $dx$-version 
on $\R^d$, since $\nabla \sqrt{\rho} \in L^\infty (\R^d, dx;\ \R^d)$ by assumption. 
Hence $\rho^{\frac{1}{2}}$ is of at most linear growth and thus $\rho$ of at most quadratic growth. 
Hence, since $\rho \in L^{3}(\R^d, dx)$, for some $C \in (0, \infty)$ and all $t \geq 0$ we have
\begin{align*}
\frac{\rho^2 (x)}{1 + \vert x \vert^2}\; z^{2}(t, \cdot) \leq C \rho^3 \Vert u \Vert_\infty \in L^1(\R^d, dx),
\end{align*}
and altogether (3.14) follows. Since (H4) also holds by our assumptions about $A$ 
and $\nabla \sqrt{\rho} \in L^\infty (\R^d, dx;\ \R^d)$, we can apply Theorem 3.6 and 
conclude that $\# \MC_{sym, M} \leq 1$ and again by Remark 4.1(ii) the assertion follows.
\end{proof}

\yx
\section*{Acknowledgements}
Financial support by the HCM in Bonn, by the Deutsche Forschungsgemeinschaft (DFG, German Research Foundation)- SFB 1283/2 2021 - 317210226, the Russian Foundation for Fundamental Research Grant 20-01-00432, Moscow Center of Fundamental Applied Mathematics, and the Simons-IUM fellowship are gratefully acknowledged.
\xy

\end{document}